\begin{document}

\newcommand{\A}{\mbox{${{{\cal A}}}$}}


\author{Attila Losonczi}
\title{On mean-sets}

\date{10 August 2018}

\newtheorem{thm}{\qquad Theorem}[section]
\newtheorem{prp}[thm]{\qquad Proposition}
\newtheorem{lem}[thm]{\qquad Lemma}
\newtheorem{cor}[thm]{\qquad Corollary}
\newtheorem{rem}[thm]{\qquad Remark}
\newtheorem{ex}[thm]{\qquad Example}
\newtheorem{prb}[thm]{\qquad Problem}
\newtheorem{df}[thm]{\qquad Definition}

\maketitle

\begin{abstract}

\noindent

We introduce a new type of means. It is new in two ways: its domain consists of sets and its values are sets too. We investigate the properties and behavior of such generalization. We also present many naturally arisen examples for such means.

\noindent
\footnotetext{\noindent
AMS (2010) Subject Classifications: 26E60, 40A05, 28A10 \\

Key Words and Phrases: generalized mean of set, sequence of approximating sets, Lebesgue and Hausdorff measure}

\end{abstract}

\section{Introduction}
This paper can be considered as a continuation of the investigations started in \cite{lamis} and \cite{lamisii} where we started to build the theory of means on infinite sets. An ordinary mean is for calculating the mean of two (or finitely many) numbers. This can be extended in many ways in order to get a more general concept where we have a mean on some infinite subsets of $\mathbb{R}$. The various general properties of such means, the relations among those means were studied thoroughly in \cite{lamis}, \cite{lamisii},\cite{lamubs},\cite{lambm} and \cite{lamsbm}. 

In this paper we go on in generalization. There are natural examples where the mean of a set $H\subset\mathbb{R}$ is not a single number, instead it is a set which satisfies the generalization of the required mean inequality. We call such set the mean-set of the underlying set $H$. Here we are going to investigate such functions. First we describe many possible properties of mean-sets and some basic relations among them. Some of those properties are generalizations of properties of ordinary/generalized means and some of the new properties can be applied for mean-sets only.
We then study what operations can be defined between mean-sets and also between a mean-set and a generalized mean.

Finally we present many natural examples for mean-sets. We present some that are based on measures and some that are created by using ordinary means. In the last part of the paper we build three mean-sets that are defined by sequences of symmetric sets that approximate the underlying countably infinite set in a natural way. 
\subsection{Basic notations}
For easier readability we copy some basic notations from \cite{lamis} and \cite{lamisii}.

Throughout this paper function $\A()$ will denote the arithmetic mean of any number of variables. Moreover if $(a_n)$ is an infinite sequence and $\lim_{n\to\infty}\A(a_1,\dots,a_n)$ exists then $\A((a_n))$ will denote its limit.

\begin{df}\label{davg}Let $\mu^s$ denote the s-dimensional Hausdorff measure ($0\leq s\leq 1$). If $0<\mu^s(H)<+\infty$ (i.e. $H$ is an $s$-set) and $H$ is $\mu^s$ measurable then $$Avg(H)=\frac{\int\limits_H x\ d\mu^s}{\mu^s(H)}.$$
If $0\leq s\leq 1$ then set $Avg^s=Avg|_{\{\text{measurable s-sets}\}}$. E.g. $Avg^1$ is $Avg$ on all Lebesgue measurable sets with positive measure.
\end{df}

\begin{df}For $K\subset\mathbb{R},\ y\in\mathbb{R}$ let us use the notation $$K^{y-}=K\cap(-\infty,y],K^{y+}=K\cap[y,+\infty).$$
\end{df}

Let $T_s$ denote the reflection to point $s\in\mathbb{R}$ that is $T_s(x)=2s-x\ (x\in\mathbb{R})$. $H\subset\mathbb{R}$ is called symmetric if $\exists s\in\mathbb{R}$ such that $T_s(H)=H$.

If $H\subset\mathbb{R},x\in\mathbb{R}$ then set $H+x=\{h+x:h\in H\}$. Similarly $\alpha H=\{\alpha h:h\in H\}\ (\alpha\in\mathbb{R})$.

We use the convention that these operations $+,\cdot$ have to be applied prior to the set theoretical operations, e.g. $H\cup K\cup L+x=H\cup K\cup (L+x)$.

$int(H), cl(H), H'$ will denote the interior, the closure and accumulation points of $H\subset\mathbb{R}$ respectively. Let $\varliminf H=\inf H',\ \varlimsup H=\sup H'$ for infinite bounded $H$.

If $a,b\in\mathbb{R},a>b$ then we use the convention that $[a,b]=[b,a]$ and similarly for the other types of intervals.

Usually ${\cal{K}},{\cal{M}}$ will denote means, $Dom({\cal{K}})$ denotes the domain of ${\cal{K}}$.

\begin{df}A \textbf{generalized mean} is a function ${\cal{K}}:C\to \mathbb{R}$ where $C\subset P(\mathbb{R})$ consists of some (finite or infinite) bounded subsets of $\mathbb{R}$, $\emptyset\notin C$ and $\inf H\leq {\cal{K}}(H)\leq\sup H$ holds for all $H\in C$. We call ${\cal{K}}$  an \textbf{ordinary mean} if $C$ consists of finite sets only.
\end{df}

\section{Properties of mean-sets}

There are examples where the mean of a set $H\subset\mathbb{R}$ is not a single number, instead it is a set. We call such set the mean-set of $H$. More precisely:

\begin{df}A \textbf{mean-set} is a function ${\cal{MS}}:C\to P(\mathbb{R})$ where $C\subset P(\mathbb{R})$ consists of some (finite or infinite) bounded subsets of $\mathbb{R}$, $\emptyset\notin C$ and ${\cal{MS}}(H)\subset[\inf H,\sup H]$ holds for all $H\in C$.
\end{df}

A mean-set can be considered as a generalization of a generalized mean ${\cal{K}}:C\to \mathbb{R}$  ($C\subset P(\mathbb{R})$) if we take $\tilde{\cal{K}}(H)=\{{\cal{K}}(H)\}$.

\smallskip

Obviously the identity map $id_C(H)=H$ is a mean-set.

\smallskip

In the definition we allow a mean-set having a value as the empty set $\emptyset$ in order to get a fairly general notion. However in most of the cases it will not happen.

\subsection{Basic properties of mean-sets}\label{ssexpprop}
Throughout these subsections ${\cal{MS}}$ will denote a mean-set and $Dom({\cal{MS}})$ will denote its domain. 

\smallskip

Usually we expect $Dom({\cal{MS}})$ to be closed under finite union and intersection and 
${\cal{MS}}(H)$ to be Borel/Lebesgue measurable whenever $H\in Dom({\cal{MS}})$.

\begin{df}\hfill

\begin{itemize}
\item $\cal{MS}$ is called \textbf{internal} if for all $H\in Dom({\cal{MS}})\ {\cal{MS}}(H)\subset[\inf H,\sup H]$. 
${\cal{MS}}$ is \textbf{strong internal} if for all infinite $H\in Dom({\cal{MS}})$ $${\cal{MS}}(H)\subset[\varliminf H,\varlimsup H].$$ 

\item $\cal{MS}$ is \textbf{monotone} if $\sup H_1\leq\inf H_2$ implies that \[\inf{\cal{MS}}(H_1)\leq\inf {\cal{MS}}(H_1\cup H_2)\leq\sup{\cal{MS}}(H_1\cup H_2)\leq\sup {\cal{MS}}(H_2).\] 
${\cal{MS}}$ is \textbf{strong monotone} if ${\cal{MS}}$ is strong internal and $\varlimsup H_1\leq\varliminf H_2$ implies that \[\varliminf{\cal{MS}}(H_1)\leq\varliminf {\cal{MS}}(H_1\cup H_2)\leq\varlimsup{\cal{MS}}(H_1\cup H_2)\leq\varlimsup {\cal{MS}}(H_2).\] 

${\cal{MS}}$ is \textbf{mean-monotone} if $H,K_1,K_2,H\cup K_1,H\cup K_2\in Dom({\cal{MS}})$, \[\sup K_1\leq\inf{\cal{MS}}(H)\leq\sup{\cal{MS}}(H)\leq\inf K_2\text{ implies that }\]
\[\inf{\cal{MS}}(H\cup K_1)\leq\inf{\cal{MS}}(H)\leq\sup{\cal{MS}}(H)\leq\sup{\cal{MS}}(H\cup K_2).\]

\item The mean is \textbf{translation invariant} if $x\in\mathbb{R}, H\in Dom({\cal{MS}})$ then $H+x\in Dom({\cal{MS}}),\ {\cal{MS}}(H+x)={\cal{MS}}(H)+x$.

\item ${\cal{MS}}$ is \textbf{symmetric} if $H\in Dom({\cal{MS}})$ bounded and symmetric implies $T_s({\cal{MS}}(H))={\cal{MS}}(H)$.

${\cal{MS}}$ is \textbf{reflection invariant} if $H\in Dom({\cal{MS}})$ being bounded implies $T_s({\cal{MS}}(H))={\cal{MS}}(T_s(H))$ for all $s\in\mathbb{R}$.

\item $\cal{MS}$ is \textbf{homogeneous} if $H\in Dom({\cal{MS}}),\ \alpha\in\mathbb{R}^+$ then $\alpha H\in Dom({\cal{MS}})$, ${\cal{MS}}(\alpha H)=\alpha {\cal{MS}}(H)$.

\end{itemize}
\end{df}

\begin{prp}\label{pieq}Internality is equivalent to \newline\indent$\inf H\leq\inf{\cal{MS}}(H)$ and $\sup{\cal{MS}}(H)\leq\sup H\ (H\in Dom({\cal{MS}})).$

Strong-internality is equivalent to \newline\indent$\varliminf H\leq\inf{\cal{MS}}(H)$ and $\sup{\cal{MS}}(H)\leq\varlimsup H\ (H\in Dom({\cal{MS}})).$\qed
\end{prp}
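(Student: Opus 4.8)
The plan is to reduce both biconditionals to a single elementary observation about containment in a closed interval, and then apply that observation twice with different endpoints. The observation is: for any set $S\subset\mathbb{R}$ and any reals $a\leq b$, the containment $S\subset[a,b]$ holds if and only if $a\leq\inf S$ and $\sup S\leq b$. Once this is in hand, the proposition is just a matter of substituting the right endpoints into the definitions of internality and strong-internality.

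First I would establish the interval-containment fact. For the forward direction, assuming $S\subset[a,b]$ with $S\neq\emptyset$, every $x\in S$ satisfies $a\leq x$, so $a$ is a lower bound of $S$ and hence $a\leq\inf S$ by definition of the greatest lower bound; symmetrically $b\geq\sup S$. For the converse, if $a\leq\inf S$ and $\sup S\leq b$, then each $x\in S$ obeys $a\leq\inf S\leq x\leq\sup S\leq b$, so $x\in[a,b]$. I would then apply this twice. For internality I take $a=\inf H$, $b=\sup H$ (legitimate since $\inf H\leq\sup H$ for nonempty $H$), turning ${\cal{MS}}(H)\subset[\inf H,\sup H]$ into exactly $\inf H\leq\inf{\cal{MS}}(H)$ together with $\sup{\cal{MS}}(H)\leq\sup H$. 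For strong-internality, which concerns infinite $H$, I take $a=\varliminf H=\inf H'$ and $b=\varlimsup H=\sup H'$; since $H$ is infinite and bounded, $H'\neq\emptyset$ and $\varliminf H\leq\varlimsup H$, so the interval is well-defined and the same substitution yields the second equivalence.

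The one point requiring care, and the closest thing to an obstacle, is the degenerate case ${\cal{MS}}(H)=\emptyset$, which the definition of a mean-set explicitly permits. Here I would invoke the standard conventions $\inf\emptyset=+\infty$ and $\sup\emptyset=-\infty$: the containment $\emptyset\subset[a,b]$ holds vacuously, while $a\leq+\infty$ and $-\infty\leq b$ hold trivially, so both sides of each biconditional are satisfied and the equivalence survives intact. Apart from handling this convention, I expect no genuine difficulty: the statement is essentially a restatement of the defining properties of infimum and supremum as greatest lower bound and least upper bound, so the entire argument is short and formal.
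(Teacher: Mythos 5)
Your proposal is correct and matches the paper's treatment: the paper states this proposition with no proof at all (marking it \qed as immediate from the definitions of $\inf$ and $\sup$ as greatest lower and least upper bounds), and your interval-containment lemma applied with the two choices of endpoints is exactly the argument being taken for granted. Your explicit handling of the ${\cal{MS}}(H)=\emptyset$ case via the conventions $\inf\emptyset=+\infty$, $\sup\emptyset=-\infty$ is a reasonable extra precaution that the paper does not bother to mention.
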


\begin{prp}If ${\cal{MS}}$ is strong internal and $H'=\{h\}$ then ${\cal{MS}}(H)=\{h\}$.\qed
\end{prp}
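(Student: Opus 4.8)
The plan is to unwind the definitions and observe that the hypothesis $H'=\{h\}$ forces the strong-internality interval to degenerate to a single point. First I would record that, since $H$ possesses an accumulation point, $H$ must be infinite, and since $H$ lies in $Dom({\cal{MS}})$ it is bounded; hence $\varliminf H$ and $\varlimsup H$ are both defined. By the very definition of these quantities, $\varliminf H=\inf H'=\inf\{h\}=h$ and, symmetrically, $\varlimsup H=\sup H'=\sup\{h\}=h$.

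Next I would apply the strong-internality hypothesis directly to the infinite set $H$, which gives
$${\cal{MS}}(H)\subset[\varliminf H,\varlimsup H]=[h,h]=\{h\}.$$
Thus ${\cal{MS}}(H)$ is contained in the singleton $\{h\}$, so it is equal either to $\{h\}$ or to $\emptyset$. The entire argument up to this point is a one-line computation; no result beyond the definitions of $\varliminf,\varlimsup$ and of strong internality is required, and Proposition~\ref{pieq} is not needed.

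The only place that warrants a comment is the passage from containment to equality, i.e.\ excluding the empty value. Under the standing convention of this section that the value of the mean-set is nonempty (the empty value, although formally admitted by the definition, is ruled out in the situations of interest, as noted after the definition of mean-set), we have ${\cal{MS}}(H)\neq\emptyset$, and therefore ${\cal{MS}}(H)=\{h\}$. I do not expect any genuine obstacle here: the substance of the statement is simply that collapsing $[\varliminf H,\varlimsup H]$ to a point pins down the mean-set, the sole subtlety being the emptiness convention just invoked.
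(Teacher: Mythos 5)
Your proof is correct and is exactly the argument the paper has in mind; the paper marks this proposition with \qed and omits the proof as immediate, since $H'=\{h\}$ forces $\varliminf H=\varlimsup H=h$ and strong internality then pins ${\cal{MS}}(H)$ inside $[h,h]$. Your explicit remark about excluding the value $\emptyset$ is a reasonable (indeed slightly more careful) reading of the convention stated after the definition of mean-set, and does not change the substance.
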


\subsection{Operations}

\begin{prp}Let ${\cal{MS}}_1,{\cal{MS}}_2$ are mean-sets on the same domain. Then $H\mapsto{\cal{MS}}_1(H)\cup{\cal{MS}}_2(H),\ H\mapsto{\cal{MS}}_1(H)\cap{\cal{MS}}_2(H)$ are mean-sets as well. More generally: if $O:P(\mathbb{R})\times P(\mathbb{R})\to P(\mathbb{R})$ is given such that $\min\{\inf A,\inf B\}\leq \inf O(A,B)\leq \sup O(A,B)\leq\max\{\sup A,\sup B\}$ holds then $H\mapsto O({\cal{MS}}_1(H),{\cal{MS}}_2(H))$ is a mean-set too.
\end{prp}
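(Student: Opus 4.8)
The plan is to prove the general statement about $O$ first and then obtain union and intersection as special cases of it. Since the new map shares its domain $C$ with ${\cal{MS}}_1$ and ${\cal{MS}}_2$, and $C$ already consists of bounded subsets of $\mathbb{R}$ with $\emptyset\notin C$, the only condition left to verify is internality, i.e. that the value of the new map at each $H\in C$ lies in $[\inf H,\sup H]$.

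First I would fix $H\in C$ and abbreviate $A={\cal{MS}}_1(H)$ and $B={\cal{MS}}_2(H)$. By the internality built into the definition of a mean-set we have $A\subset[\inf H,\sup H]$ and $B\subset[\inf H,\sup H]$. Passing to infima and suprema gives $\inf H\leq\inf A$, $\inf H\leq\inf B$, $\sup A\leq\sup H$ and $\sup B\leq\sup H$, hence $\inf H\leq\min\{\inf A,\inf B\}$ and $\max\{\sup A,\sup B\}\leq\sup H$.

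Next I would feed these bounds into the hypothesis on $O$. Chaining $\inf H\leq\min\{\inf A,\inf B\}\leq\inf O(A,B)$ together with $\sup O(A,B)\leq\max\{\sup A,\sup B\}\leq\sup H$ yields $\inf H\leq\inf O(A,B)\leq\sup O(A,B)\leq\sup H$; since any set lies between its own infimum and supremum, we conclude $O(A,B)\subset[\inf H,\sup H]$. This is exactly internality for $H\mapsto O({\cal{MS}}_1(H),{\cal{MS}}_2(H))$, so that map is a mean-set.

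Finally I would recover the two named operations. For $O(A,B)=A\cup B$ one has $\inf(A\cup B)=\min\{\inf A,\inf B\}$ and $\sup(A\cup B)=\max\{\sup A,\sup B\}$, so the hypothesis on $O$ holds with equality; for $O(A,B)=A\cap B$ the inclusions $A\cap B\subset A$ and $A\cap B\subset B$ force $\inf(A\cap B)\geq\max\{\inf A,\inf B\}\geq\min\{\inf A,\inf B\}$ and $\sup(A\cap B)\leq\min\{\sup A,\sup B\}\leq\max\{\sup A,\sup B\}$, so the hypothesis holds there too. I do not expect a genuine obstacle; the one point deserving a remark is the possibility of an empty value (which can occur for the intersection): here $\inf O(A,B)\leq\sup O(A,B)$ may fail under the conventions $\inf\emptyset=+\infty$, $\sup\emptyset=-\infty$, but this is harmless, since $\emptyset\subset[\inf H,\sup H]$ trivially and the definition of a mean-set explicitly permits empty values.
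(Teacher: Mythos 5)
Your proof is correct and follows essentially the same route as the paper: it reduces everything to the chain $\inf H\leq\min\{\inf{\cal{MS}}_1(H),\inf{\cal{MS}}_2(H)\}\leq\inf O({\cal{MS}}_1(H),{\cal{MS}}_2(H))$ and its $\sup$ counterpart, then specializes to union and intersection. Your extra remark about the empty intersection is a reasonable point of care that the paper leaves implicit.
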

\begin{proof}It is simply because \[\inf H\leq\min\{\inf{\cal{MS}}_1(H),\inf{\cal{MS}}_2(H)\}\leq\inf O({\cal{MS}}_1(H),{\cal{MS}}_2(H)).\] $\sup$ can be handled similarly.
\end{proof}

We can define a natural two variable operation on mean-sets that makes it a semigroup.
\begin{df}Let ${\cal{MS}}_1,{\cal{MS}}_2$ be two mean-sets such that $Ran({\cal{MS}}_2)\subset Dom({\cal{MS}}_1)$. Then $({\cal{MS}}_1\circ{\cal{MS}}_2)(H)={\cal{MS}}_1({\cal{MS}}_2(H))$.
\end{df}

\begin{prp}Let ${\cal{MS}}_1,{\cal{MS}}_2$ be two mean-sets such that $Ran({\cal{MS}}_2)\subset Dom({\cal{MS}}_1)$. Then ${\cal{MS}}_1\circ{\cal{MS}}_2$ is a mean-set. If ${\cal{MS}}_2$ is strong-internal then so is ${\cal{MS}}_1\circ{\cal{MS}}_2$.
Moreover if both mean-sets are translation-invariant, reflection-invariant, homogeneous then so is ${\cal{MS}}_1\circ{\cal{MS}}_2$.
\end{prp}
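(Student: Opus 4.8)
The plan is to verify each assertion by unwinding the definition $({\cal{MS}}_1\circ{\cal{MS}}_2)(H)={\cal{MS}}_1({\cal{MS}}_2(H))$ and exploiting the nesting of the confining intervals. First I would record that the range condition $Ran({\cal{MS}}_2)\subset Dom({\cal{MS}}_1)$ together with $\emptyset\notin Dom({\cal{MS}}_1)$ forces ${\cal{MS}}_2(H)$ to be a nonempty bounded set lying in $Dom({\cal{MS}}_1)$ for every $H\in Dom({\cal{MS}}_2)$, so that ${\cal{MS}}_1({\cal{MS}}_2(H))$ is always defined. To show the composition is a mean-set I would chain two internality estimates: by internality of ${\cal{MS}}_1$ applied to the argument ${\cal{MS}}_2(H)$,
\[{\cal{MS}}_1({\cal{MS}}_2(H))\subset[\inf {\cal{MS}}_2(H),\sup {\cal{MS}}_2(H)],\]
while internality of ${\cal{MS}}_2$ gives $[\inf {\cal{MS}}_2(H),\sup {\cal{MS}}_2(H)]\subset[\inf H,\sup H]$; combining the two yields the required containment.

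For strong-internality the key remark is that it suffices that the inner map ${\cal{MS}}_2$ be strong internal, since the outer map only needs to confine its output to the infimum--supremum interval of its argument. Concretely, for infinite $H$ I would apply plain internality of ${\cal{MS}}_1$ to obtain ${\cal{MS}}_1({\cal{MS}}_2(H))\subset[\inf {\cal{MS}}_2(H),\sup {\cal{MS}}_2(H)]$, and then use strong internality of ${\cal{MS}}_2$ to nest this interval inside $[\varliminf H,\varlimsup H]$. Note that one never needs ${\cal{MS}}_2(H)$ itself to be infinite, so plain internality of ${\cal{MS}}_1$ is enough here.

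The three algebraic invariances all follow the same two-step pattern: apply the invariance of the outer map ${\cal{MS}}_1$ to the set ${\cal{MS}}_2(H)$, then the invariance of the inner map ${\cal{MS}}_2$ to $H$. For translation invariance, $({\cal{MS}}_1\circ{\cal{MS}}_2)(H+x)={\cal{MS}}_1({\cal{MS}}_2(H)+x)={\cal{MS}}_1({\cal{MS}}_2(H))+x$, the first equality by translation invariance of ${\cal{MS}}_2$ and the second by that of ${\cal{MS}}_1$ applied to ${\cal{MS}}_2(H)$; reflection invariance uses $T_s({\cal{MS}}_1({\cal{MS}}_2(H)))={\cal{MS}}_1(T_s({\cal{MS}}_2(H)))={\cal{MS}}_1({\cal{MS}}_2(T_s(H)))$; and homogeneity uses $\alpha\,{\cal{MS}}_1({\cal{MS}}_2(H))={\cal{MS}}_1(\alpha\,{\cal{MS}}_2(H))={\cal{MS}}_1({\cal{MS}}_2(\alpha H))$. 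In each case I would also check the domain closure, which is immediate: the relevant property of ${\cal{MS}}_2$ keeps $H+x$, $T_s(H)$, $\alpha H$ in $Dom({\cal{MS}}_2)$, and the corresponding property of ${\cal{MS}}_1$ keeps the transformed image sets in $Dom({\cal{MS}}_1)$.

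The main point to watch—rather than a genuine obstacle—is the bookkeeping at the empty set and the well-definedness of the iterated application: one must confirm that the inner map never returns $\emptyset$ and always returns a bounded set, both of which are guaranteed by $Ran({\cal{MS}}_2)\subset Dom({\cal{MS}}_1)$. Everything else is a routine transfer of each property from the two factors to the composite.
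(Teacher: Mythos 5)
Your proposal is correct and follows essentially the same route as the paper: chain the internality (resp.\ strong internality of ${\cal{MS}}_2$ plus plain internality of ${\cal{MS}}_1$) via $\inf H\leq\inf{\cal{MS}}_2(H)\leq\inf{\cal{MS}}_1({\cal{MS}}_2(H))$ and its $\sup$ counterpart, and transfer the invariances by the obvious two-step computation. You simply spell out the well-definedness and empty-set bookkeeping that the paper leaves implicit.
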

\begin{proof}By definitions of mean-set 
\[\inf H\leq\inf {\cal{MS}}_2(H)\leq\inf{\cal{MS}}_1({\cal{MS}}_2(H))\] 
whenever $H\in Dom({\cal{MS}})$. $\sup$ can be handled similarly.

Strong-internality similarly follows from \ref{pieq}. The inheritance of the remaining properties is straightforward.
\end{proof}

Clearly the identity map is the unit element in the semigroup.

\begin{df}Let ${\cal{MS}}$ be a mean-set such that $H\in Dom({\cal{MS}})$ implies that ${\cal{MS}}(H)\in Dom({\cal{MS}})$. Then we can define new mean-sets in the following way.
${\cal{MS}}^{(0)}(H)=H,\ {\cal{MS}}^{(n+1)}(H)={\cal{MS}}({\cal{MS}}^{(n)}(H))\ (n\in\mathbb{N}\cup\{0\})$. Clearly ${\cal{MS}}^{(1)}={\cal{MS}}$.

We define one more mean-set: ${\cal{MS}}^{(\infty)}(H)=\bigcup\limits_{i=0}^{\infty}{\cal{MS}}^{(i)}(H)$.
\end{df}

\begin{prp}Let ${\cal{MS}}$ be a mean-set. It has an inverse for operation $\circ$ iff it is injective and $\inf H=\inf {\cal{MS}}(H)$ and $\sup H=\sup {\cal{MS}}(H)$. The mean-sets having these properties constitute a group for this operation.
\end{prp}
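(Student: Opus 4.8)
The plan is to prove the biconditional in both directions and then check the four group axioms separately, writing $\mathcal{MS}^{-1}$ for the candidate inverse and using internality throughout via Proposition \ref{pieq}.

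For the forward implication, suppose $\mathcal{MS}^{-1}\circ\mathcal{MS}=id$ and $\mathcal{MS}\circ\mathcal{MS}^{-1}=id$. Injectivity of $\mathcal{MS}$ is immediate: if ${\cal{MS}}(H_1)={\cal{MS}}(H_2)$, applying $\mathcal{MS}^{-1}$ gives $H_1=H_2$. For the inf/sup equalities I would combine internality of both mean-sets. Internality of $\mathcal{MS}$ gives $\inf H\leq\inf{\cal{MS}}(H)$ and $\sup{\cal{MS}}(H)\leq\sup H$. Applying internality of $\mathcal{MS}^{-1}$ to the set $K={\cal{MS}}(H)$ and using $\mathcal{MS}^{-1}(K)=H$ yields $\inf{\cal{MS}}(H)=\inf K\leq\inf H$ and $\sup H\leq\sup K=\sup{\cal{MS}}(H)$; the two resulting chains of inequalities force the desired equalities $\inf H=\inf{\cal{MS}}(H)$ and $\sup H=\sup{\cal{MS}}(H)$.

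For the converse, given that $\mathcal{MS}$ is injective with $\inf H=\inf{\cal{MS}}(H)$ and $\sup H=\sup{\cal{MS}}(H)$, I would construct the inverse explicitly on $R=Ran(\mathcal{MS})$: since $\mathcal{MS}$ is injective it is a bijection onto $R$, so set $\mathcal{MS}^{-1}(K)$ to be the unique $H$ with ${\cal{MS}}(H)=K$. The one nontrivial point is that $\mathcal{MS}^{-1}$ is itself a mean-set, i.e. $\mathcal{MS}^{-1}(K)\subset[\inf K,\sup K]$. This is exactly where the inf/sup hypotheses do the work: for $K={\cal{MS}}(H)$ they give $[\inf K,\sup K]=[\inf{\cal{MS}}(H),\sup{\cal{MS}}(H)]=[\inf H,\sup H]\supset H=\mathcal{MS}^{-1}(K)$. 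Note also that $\inf H=\inf{\cal{MS}}(H)$ rules out ${\cal{MS}}(H)=\emptyset$, so $\emptyset\notin R$. By construction $\mathcal{MS}^{-1}\circ\mathcal{MS}$ and $\mathcal{MS}\circ\mathcal{MS}^{-1}$ are the relevant identity maps.

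Finally, for the group claim I would fix a domain $C$ closed under the operation and regard the relevant mean-sets as bijections $C\to C$, so that $\circ$ is always defined and $id_C$ is the unit. Closure is the routine computation: a composition of injective maps is injective, and $\inf(\mathcal{MS}_1\circ\mathcal{MS}_2)(H)=\inf\mathcal{MS}_1(\mathcal{MS}_2(H))=\inf\mathcal{MS}_2(H)=\inf H$, and likewise for $\sup$. Associativity is inherited from composition of functions, the identity map plainly has the two properties, and the inverse constructed above again has them, since $\mathcal{MS}^{-1}(K)=H$ with $K={\cal{MS}}(H)$ gives $\inf\mathcal{MS}^{-1}(K)=\inf H=\inf K$ and symmetrically for $\sup$. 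The step I expect to demand the most care is not any single computation but the bookkeeping of domains: making precise in what sense $\mathcal{MS}^{-1}$ is a genuine two-sided inverse against a single unit element, which is what forces one to work with mean-sets whose range coincides with a common domain rather than with arbitrary domains.
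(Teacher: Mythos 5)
Your proof is correct and follows essentially the same route as the paper, whose entire argument is the single instruction to apply Proposition \ref{pieq} (internality) to both ${\cal{MS}}(H)$ and ${\cal{MS}}^{-1}(H)$ --- exactly your forward-direction squeeze. The rest of your write-up (explicit construction of the inverse on $Ran({\cal{MS}})$, verification that it is a mean-set, and the domain bookkeeping for the group axioms) just fills in details the paper leaves implicit, and does so correctly.
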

\begin{proof}Apply \ref{pieq} to both ${\cal{MS}}(H)$ and ${\cal{MS}}^{-1}(H)$.
\end{proof}

The question arises whether such mean-set exists that is not the identity and defined on a reasonably big subset of $P(\mathbb{R})$. We present such an example.

\begin{ex}Let $Dom\ {\cal{MS}}$ be the set of open bounded subsets of $\mathbb{R}$. If $H$ is an open bounded set, then $H=\bigcup\limits_1^nI_i$ or $H=\bigcup\limits_1^{\infty}I_i$ where $I_i$ is an open interval and $I_i\cap I_j=\emptyset\ (i\ne j)$. Let us remove the middle third closed interval of $I_i$ from $I_i$ and let us denote the resulted set by $I^*_i$ that the union of two open interval. Set ${\cal{MS}}(H)=\bigcup\limits_1^{\infty}I^*_i$. Evidently $\inf H=\inf {\cal{MS}}(H)$ and $\sup H=\sup {\cal{MS}}(H)$. 

We show that ${\cal{MS}}$ is injective. Let $H\in Dom\ {\cal{MS}}$. We have to show that if ${\cal{MS}}(K)={\cal{MS}}(H)$ then $K=H$. We know that ${\cal{MS}}(H)$ is open hence it is a disjoint union of some open intervals. Let $I$ be an interval in this union. By the construction from $H$, $I$  has a pair $J_H$ that has the same length. Similarly it has another pair $J_K$ from the construction from $K$. If $J_H=J_K$ then the same interval exists in both $H$ and $K$. If we have this property for all $I$ then $H=K$ holds. Suppose that $J_H\ne J_K$. Then they are on different sides of $I$ and they came from different intervals from $H$ and $K$. Say $I,J_H$ came from $I^1_H\subset H$, and $I,J_K$ came from $I^1_K\subset K$. But then $J_K$ has to have a pair $I_2$ and an ancestor interval $I^2_H\subset H$. Then $I_2$ has to have a pair $I_3$ by construction from $K$ and so on. We end up with an infinite sequence of intervals with equal length that is a contradiction because $H,K$ were bounded.

Clearly ${\cal{MS}}^{(n)}\ne{\cal{MS}}^{(m)}\ (n\ne m)$ hence ${\cal{MS}}$ generates an infinite cyclic subgroup. \qed
\end{ex}

\begin{df}Product of a mean-sets and a generalized mean: Let ${\cal{MS}},{\cal{K}}$ be a mean-sets and a generalized mean respectively such that $Ran({\cal{MS}})\subset Dom({\cal{K}})$. Then set $({\cal{K}}\circ{\cal{MS}})(H)={\cal{K}}({\cal{MS}}(H))$.
\end{df}

\begin{prp}Let ${\cal{MS}},{\cal{K}}$ be a mean-sets and a generalized mean respectively such that $Ran({\cal{MS}})\subset Dom({\cal{K}})$. Then $({\cal{K}}\circ{\cal{MS}})(H)$ is a generalized mean. If ${\cal{MS}}$ is strong-internal then so is ${\cal{K}}\circ{\cal{MS}}$. Moreover if both ${\cal{MS}},{\cal{K}}$ are translation-invariant, reflection-invariant, homogeneous then so is ${\cal{K}}\circ{\cal{MS}}$.
\end{prp}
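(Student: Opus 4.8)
The plan is to verify each of the three asserted properties in turn, in each case unwinding the definition of the composite ${\cal{K}}\circ{\cal{MS}}$ and reducing the claim to the corresponding property of the two constituents ${\cal{MS}}$ and ${\cal{K}}$. Since ${\cal{K}}$ is a generalized mean, $({\cal{K}}\circ{\cal{MS}})(H)={\cal{K}}({\cal{MS}}(H))$ is automatically a single real number, so the output lands in $\mathbb{R}$ as required for a generalized mean; what remains for the first assertion is the internality inequality $\inf H\leq({\cal{K}}\circ{\cal{MS}})(H)\leq\sup H$.

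First I would establish that $({\cal{K}}\circ{\cal{MS}})$ is a generalized mean. Because ${\cal{MS}}(H)\in Dom({\cal{K}})$ and ${\cal{K}}$ is a generalized mean, we have
\[
\inf{\cal{MS}}(H)\leq{\cal{K}}({\cal{MS}}(H))\leq\sup{\cal{MS}}(H).
\]
Since ${\cal{MS}}$ is a mean-set, ${\cal{MS}}(H)\subset[\inf H,\sup H]$, so $\inf H\leq\inf{\cal{MS}}(H)$ and $\sup{\cal{MS}}(H)\leq\sup H$; chaining these gives $\inf H\leq({\cal{K}}\circ{\cal{MS}})(H)\leq\sup H$, which is exactly internality of a generalized mean. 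For strong-internality under the hypothesis that ${\cal{MS}}$ is strong-internal, I would run the same argument but invoke Proposition~\ref{pieq}: strong-internality of ${\cal{MS}}$ gives $\varliminf H\leq\inf{\cal{MS}}(H)$ and $\sup{\cal{MS}}(H)\leq\varlimsup H$, and combining with the mean inequality for ${\cal{K}}$ yields $\varliminf H\leq({\cal{K}}\circ{\cal{MS}})(H)\leq\varlimsup H$, the strong-internal bound for a generalized mean.

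For the invariance properties I would treat each one as a direct functorial check. For translation invariance: given $x\in\mathbb{R}$ and $H\in Dom({\cal{K}}\circ{\cal{MS}})$, translation invariance of ${\cal{MS}}$ gives ${\cal{MS}}(H+x)={\cal{MS}}(H)+x$, and then translation invariance of ${\cal{K}}$ applied to the set ${\cal{MS}}(H)$ gives ${\cal{K}}({\cal{MS}}(H)+x)={\cal{K}}({\cal{MS}}(H))+x$, so $({\cal{K}}\circ{\cal{MS}})(H+x)=({\cal{K}}\circ{\cal{MS}})(H)+x$. Homogeneity follows identically with $\alpha H$ in place of $H+x$ and the scalar $\alpha\in\mathbb{R}^+$. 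For reflection invariance I would use $({\cal{K}}\circ{\cal{MS}})(T_s(H))={\cal{K}}({\cal{MS}}(T_s(H)))={\cal{K}}(T_s({\cal{MS}}(H)))=T_s({\cal{K}}({\cal{MS}}(H)))=T_s(({\cal{K}}\circ{\cal{MS}})(H))$, where the middle steps use reflection invariance of ${\cal{MS}}$ and then of ${\cal{K}}$ respectively.

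I do not anticipate a serious obstacle here: each property of the composite is essentially inherited by composing the matching property of ${\cal{MS}}$ on the inside with that of ${\cal{K}}$ on the outside, exactly as in the earlier proposition on ${\cal{MS}}_1\circ{\cal{MS}}_2$. The one point deserving a moment's care is the domain bookkeeping for the invariance claims: one should note that the relevant hypotheses on ${\cal{MS}}$ and ${\cal{K}}$ guarantee that the translated, scaled, or reflected sets remain in the appropriate domains (so that ${\cal{MS}}(H+x)\in Dom({\cal{K}})$, and so on), which is what makes the composite well-defined on the transformed inputs. Once that is observed, the argument is a routine chain of equalities, so I would simply remark that the inheritance of the remaining properties is straightforward.
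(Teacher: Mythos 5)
Your proof is correct and follows essentially the same route as the paper's: the chain $\inf H\leq\inf{\cal{MS}}(H)\leq{\cal{K}}({\cal{MS}}(H))\leq\sup{\cal{MS}}(H)\leq\sup H$ for internality, the analogue via Proposition~\ref{pieq} for strong-internality, and the observation that the invariance properties are inherited by composing the matching properties of the two constituents. You merely spell out the invariance checks that the paper dismisses as straightforward.
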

\begin{proof}By definitions of mean-set and generalized mean \[\inf H\leq\inf {\cal{MS}}(H)\leq{\cal{K}}({\cal{MS}}(H))\leq\sup {\cal{MS}}(H)\leq\sup H\] whenever $H\in Dom({\cal{MS}})$ .

Strong-internality similarly follows from \ref{pieq}. The inheritance of the remaining properties is straightforward.
\end{proof}

\subsection{Some more properties}

We enumerate some various other properties that we will refer to later. 

\begin{df}\hfill
\begin{itemize}
\item $\cal{MS}$ is \textbf{finite-independent} if $H\in Dom({\cal{MS}})$ is infinite, $V$ is finite then $H\cup V,H-V\in Dom({\cal{MS}})$ and \newline${\cal{MS}}(H)={\cal{MS}}(H\cup V)={\cal{MS}}(H-V).$

\item ${\cal{MS}}$ is \textbf{convex} if $I$ is a closed interval and ${\cal{MS}}(H)\subset I,\ L\subset I,H\cup L\in Dom({\cal{MS}})$ then ${\cal{MS}}(H\cup L)\subset I$. 

\item ${\cal{MS}}$ is called \textbf{closed} if $H,cl(H)\in Dom({\cal{MS}})$ then \newline${\cal{MS}}(cl(H))={\cal{MS}}(H)$.

\item ${\cal{MS}}$ is called \textbf{accumulated} if $H,H'\in Dom({\cal{MS}})$ then \newline${\cal{MS}}(H')={\cal{MS}}(H)$. 

\item ${\cal{MS}}$ is \textbf{finite} if ${\cal{MS}}(H)$ is finite $(H\in Dom({\cal{MS}}))$.

\item ${\cal{MS}}$ is \textbf{idempotent} if ${\cal{MS}}({\cal{MS}}(H))={\cal{MS}}(H)$ $(H,{\cal{MS}}(H)\in Dom({\cal{MS}}))$.

\item ${\cal{MS}}$ is \textbf{increasing} if $H\subset K$ implies that ${\cal{MS}}(H)\subset{\cal{MS}}(K)$.

\item Let $H,K\subset\mathbb{R}$. We set $H\leq K$ if there exists $g:H\to K$ bijection such that $\forall x\in H\ x\leq g(x)$.
${\cal{MS}}$ is \textbf{f-increasing} if $H\leq K$ implies that ${\cal{MS}}(H)\leq{\cal{MS}}(K)$.

\item ${\cal{MS}}$ is \textbf{of finite order for each set} if $\forall H\in Dom({\cal{MS}})\ \exists n\in\mathbb{N}$ such that ${\cal{MS}}^{(n+1)}(H)={\cal{MS}}^{(n)}(H)$.

\item ${\cal{MS}}$ is \textbf{of finite order} if $\exists n\in\mathbb{N}$ such that $\forall H\in Dom({\cal{MS}})\ {\cal{MS}}^{(n+1)}(H)={\cal{MS}}^{(n)}(H)$. Otherwise it is of infinite order.

\end{itemize}
\end{df}

The following statement is a trivial consequence of the definitions.

\begin{prp}Let ${\cal{K}}$ be a generalized mean, ${\cal{MS}}_1,{\cal{MS}}_2$ be mean-sets such that $Ran({\cal{MS}}_2)\subset Dom({\cal{K}})$ and $Ran({\cal{MS}}_2)\subset Dom({\cal{MS}}_1)$. If ${\cal{MS}}_2$ is finite-independent, convex, closed, accumulated then so are ${\cal{K}}\circ{\cal{MS}}_2$ and ${\cal{MS}}_1\circ{\cal{MS}}_2$. If both ${\cal{MS}}_1,{\cal{MS}}_2$ are increasing or f-increasing then so is ${\cal{MS}}_1\circ{\cal{MS}}_2$.\qed
\end{prp}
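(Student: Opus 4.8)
The plan is to unwind the two composite definitions $({\cal MS}_1\circ{\cal MS}_2)(H)={\cal MS}_1({\cal MS}_2(H))$ and $({\cal K}\circ{\cal MS}_2)(H)={\cal K}({\cal MS}_2(H))$, and to notice that the listed properties split into three groups that behave quite differently: finite-independence, closedness and accumulatedness (which are pre-composition identities), increasing/f-increasing (which genuinely use both factors), and convexity (which I expect to be the sticking point).

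First I would treat finite-independent, closed and accumulated uniformly, since each asserts an equality of the form ${\cal MS}_2(\varphi(H))={\cal MS}_2(H)$ for a set-operation $\varphi$ applied to the \emph{argument}: $\varphi(H)=H\cup V$ or $H-V$ for a finite $V$ (finite-independence), $\varphi(H)=cl(H)$ (closedness), $\varphi(H)=H'$ (accumulatedness). Assuming ${\cal MS}_2$ has the property and that the sets involved lie in $Dom({\cal MS}_1\circ{\cal MS}_2)\subset Dom({\cal MS}_2)$, we have ${\cal MS}_2(\varphi(H))={\cal MS}_2(H)$, and applying the outer map to both sides gives ${\cal MS}_1({\cal MS}_2(\varphi(H)))={\cal MS}_1({\cal MS}_2(H))$, i.e. the composite takes the same value on $\varphi(H)$ and on $H$. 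The identical one-line argument with ${\cal K}$ in place of ${\cal MS}_1$ handles ${\cal K}\circ{\cal MS}_2$. The only genuine check is the domain bookkeeping, that $\varphi(H)$ remains in the composite domain, which is routine.

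For increasing and f-increasing I would chain the two hypotheses, which is where ``both ${\cal MS}_1,{\cal MS}_2$'' is actually used. If $H\subset K$ then ${\cal MS}_2(H)\subset{\cal MS}_2(K)$ by monotonicity of ${\cal MS}_2$, and then ${\cal MS}_1({\cal MS}_2(H))\subset{\cal MS}_1({\cal MS}_2(K))$ by monotonicity of ${\cal MS}_1$; the f-increasing case is the same, with the relation $\leq$ (witnessed by an increasing bijection) transported through each monotone factor in turn.

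The step I expect to be the real obstacle — and where I would hesitate to call the statement trivial — is convexity. The natural attempt fixes a closed interval $I$ with $({\cal MS}_1\circ{\cal MS}_2)(H)\subset I$ and $L\subset I$, and tries to invoke convexity of ${\cal MS}_2$ to control ${\cal MS}_2(H\cup L)$. The trouble is that the hypothesis constrains only the \emph{composite} value ${\cal MS}_1({\cal MS}_2(H))$, whereas convexity of ${\cal MS}_2$ requires control of the \emph{intermediate} value ${\cal MS}_2(H)$, which need not lie in $I$ at all. Taking ${\cal MS}_2=id$, which is convex, collapses the assertion for ${\cal MS}_1\circ{\cal MS}_2$ to ``${\cal MS}_1$ is convex'' and that for ${\cal K}\circ{\cal MS}_2$ to ``${\cal K}$ is convex'', neither of which is assumed. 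So convexity cannot follow from convexity of ${\cal MS}_2$ alone, and I would expect to need convexity of the outer factor as well — and perhaps more, since even then internality of ${\cal MS}_1$ only forces the hull of ${\cal MS}_2(H)$ to \emph{meet} $I$ rather than to sit inside it. Isolating the hypotheses under which convexity is genuinely preserved is, in my reading, the crux of the proof.
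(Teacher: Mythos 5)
Your handling of finite-independence, closedness and accumulatedness (pre-composition identities: apply the outer map to both sides of ${\cal{MS}}_2(\varphi(H))={\cal{MS}}_2(H)$) and of increasing/f-increasing (chain the two monotonicity hypotheses) is exactly the intended argument; the paper offers no proof at all here, labelling the whole proposition ``a trivial consequence of the definitions'' and closing with \verb|\qed|, so there is nothing in the paper beyond what you wrote for those parts.

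Your objection to the convexity claim is correct, and the paper does not answer it. The hypothesis $({\cal{MS}}_1\circ{\cal{MS}}_2)(H)\subset I$ constrains only the composite value, while convexity of ${\cal{MS}}_2$ can only be invoked once one knows ${\cal{MS}}_2(H)\subset I$, which need not hold (internality of ${\cal{MS}}_1$ gives the reverse containment of hulls, not this one). Under the natural property-by-property reading the claim is in fact refuted by material already in the paper: the identity map is convex, so taking ${\cal{MS}}_2=id$ reduces ``${\cal{MS}}_1\circ{\cal{MS}}_2$ is convex'' to ``${\cal{MS}}_1$ is convex,'' and the paper itself exhibits ${\cal{MS}}^{aas}$ as a non-convex mean-set (via $H=\{-1,2\}$, $I=[0,1]$, $L=\{0\}$). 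Your further remark that assuming convexity of the outer factor would still not suffice is also right, since ${\cal{MS}}_2(H\cup L)$ is not generally of the form ${\cal{MS}}_2(H)\cup B$ with $B\subset I$, so the outer convexity hypothesis cannot be applied either. The correct conclusion is that convexity should be deleted from the list (or the statement strengthened with additional hypotheses), not that your proof is missing an idea.
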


\section{Examples}

\begin{ex}Many of the usual topological operators are mean-sets. For example $H\mapsto int(H),\ H\mapsto cl(H),\ H\mapsto H',\ H\mapsto int(cl(H))$ are all mean-sets.
\end{ex}

\begin{prp}The mean-set ${\cal{MS}}(H)=int(H)$ is strongly-internal, strong-monotone, mean-monotone, translation invariant, reflection invariant, homogeneous, convex, finite-independent, idempotent, increasing, ${\cal{MS}}(H)\subset H$. It is not closed, not accumulated.
\end{prp}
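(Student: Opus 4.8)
My plan is to exploit the fact that $int$ is a monotone (under inclusion), idempotent operator that commutes with the homeomorphisms $x\mapsto x+t$, $T_s$ and $x\mapsto\alpha x$ ($\alpha>0$), together with the containment $int(H)\subseteq H\subseteq[\inf H,\sup H]$. The latter gives both $int(H)\subseteq H$ and internality at once. Translation- and reflection-invariance and homogeneity are then immediate, since each of these maps sends open sets to open sets and carries $int(H)$ onto the interior of the image. Idempotence holds because $int(H)$ is open, so $int(int(H))=int(H)$, and the increasing property is just $H\subseteq K\Rightarrow int(H)\subseteq int(K)$. Mean-monotonicity is likewise immediate from this inclusion-monotonicity: $H\subseteq H\cup K_1$ forces $int(H)\subseteq int(H\cup K_1)$, hence $\inf int(H\cup K_1)\le\inf int(H)$, and dually on the $\sup$ side; the separation hypotheses on $K_1,K_2$ are not even needed here.

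The decisive observation for the ``strong'' properties is that $int(H)\subseteq H'$: every $x\in int(H)$ lies in an open interval contained in $H$, and each point of such an interval is an accumulation point of $H$. Consequently, for a nonempty open set $U$ one has $U'=cl(U)$, so $\varliminf U=\inf U$ and $\varlimsup U=\sup U$. Applying $int(H)\subseteq H'$ gives $\varliminf H=\inf H'\le\inf int(H)$ and $\sup int(H)\le\sup H'=\varlimsup H$, which is exactly strong-internality by Proposition \ref{pieq}.

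For strong-monotonicity I would reduce, via the previous paragraph, to the claim that $\inf int(H_1\cup H_2)=\inf int(H_1)$ and, dually, $\sup int(H_1\cup H_2)=\sup int(H_2)$ whenever $\varlimsup H_1\le\varliminf H_2$. One inequality is free from $int(H_1)\subseteq int(H_1\cup H_2)$. For the reverse I would take a hypothetical $x\in int(H_1\cup H_2)$ with $x<\inf int(H_1)$ and derive a contradiction: if $x\ge\varliminf H_2$ then $x\ge\varlimsup H_1\ge\sup int(H_1)$, impossible; if $x<\varliminf H_2$, then on a neighbourhood of $x$ the set $H_2$ has only isolated points (its accumulation points all lie $\ge\varliminf H_2$), so after deleting finitely many such points an open subinterval around, or just beside, $x$ lands inside $H_1$, placing points of $int(H_1)$ below $\inf int(H_1)$ — again impossible. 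Convexity is proved by the same ``push the witnessing subinterval to the forbidden side'' idea: if $x\in int(H\cup L)$ lay outside the closed interval $I\supseteq int(H)$, say to the right of $I$, then since $L\subseteq I$ a short subinterval through $x$ would avoid $L$, hence lie in $H$, forcing $int(H)\not\subseteq I$. The bookkeeping around isolated points at the junction is the only genuinely delicate point in this block.

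For the negative claims, $H=\mathbb{Q}\cap[0,1]$ settles both at once: $int(H)=\emptyset$ while $cl(H)=H'=[0,1]$ and $int(cl(H))=int(H')=(0,1)$, so $int$ is neither closed nor accumulated. The step I would scrutinise most — and expect to be the real obstacle — is finite-independence, because adjoining or deleting a single point can fill or open a one-point gap and thereby change the interior: for $H=(0,1)\setminus\{\tfrac12\}$ one has $int(H)=(0,\tfrac12)\cup(\tfrac12,1)$ but $int(H\cup\{\tfrac12\})=(0,1)$. Thus, as literally stated, finite-independence appears to require a restriction of the domain (for instance to sets having no removable one-point gaps, or to regular sets with $H=cl(int(H))$), and I would either impose such a restriction or qualify the statement before attempting its proof.
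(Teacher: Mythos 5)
Your arguments are sound wherever you actually give them, and on most items they run parallel to the paper's proof, which dismisses the bulk of the list as well known and only comments on strong-internality, strong-monotonicity and the two negative claims (your witness $\mathbb{Q}\cap[0,1]$ for ``not closed, not accumulated'' is exactly the paper's). On the ``strong'' properties you take a genuinely different route: the paper gets strong-internality from the fact that $H-[\varliminf H,\varlimsup H]$ is countable and hence has empty interior, and reduces strong-monotonicity to the case $\sup H_1\leq\inf H_2$ by discarding the countably many isolated points of $H_1,H_2$ outside their accumulation ranges; you instead use $int(H)\subseteq H'$ and then argue directly on a hypothetical point of $int(H_1\cup H_2)$ below $\inf int(H_1)$, splitting on whether it is below $\varliminf H_2$. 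Both work; your version is more explicit at the only delicate spot, namely why finitely many stray points of $H_2$ near such a point cannot manufacture interior of the union without already forcing interior of $H_1$ on the forbidden side. Your observation that mean-monotonicity for $int$ follows from inclusion-monotonicity alone, with the separation hypotheses unused, is also correct.

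The one substantive divergence is finite-independence, and there you are right and the proposition as printed is not. The paper asserts finite-independence without argument, but with $H=(0,1)$ and $V=\{\tfrac12\}$ one has $int(H-V)=(0,\tfrac12)\cup(\tfrac12,1)\neq(0,1)=int(H)$, so the defining identity ${\cal{MS}}(H)={\cal{MS}}(H\cup V)={\cal{MS}}(H-V)$ fails on the unrestricted domain of bounded sets. This is an error in the statement rather than a gap in your proof, and your proposed remedies (restricting the domain, e.g.\ to sets satisfying $H=cl(int(H))$, or weakening the claim) are the appropriate ways to salvage that clause. Everything else in your proposal can stand as written.
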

\begin{proof}Most of the mentioned properties are well-known, here we just show the not-known, not completely trivial ones.

${\cal{MS}}$ is strongly-internal since $H-[\varliminf H,\varlimsup H]$ is countable hence cannot have an interior point.

To show that ${\cal{MS}}$ is strong-monotone we can assume that $\sup H_1\leq\inf H_2$ because we can leave countable many points without changing the result. Similarly in the statement to be proved we can replace $\varliminf$ with $\inf$ and $\varlimsup$ with $\sup$ for which the statement is straightforward.

$H=\mathbb{Q}\cap[0,1]$ shows that ${\cal{MS}}$ is not closed, not accumulated.
\end{proof}

\begin{df}\label{dmsaa}Let ${\cal{MS}}^{aa}(H)=\{\frac{x+y}{2}:x,y\in H\}$ and \newline\indent ${\cal{MS}}^{aas}(H)=\{\frac{x+y}{2}:x,y\in H,x\ne y\}$.
\end{df}

\begin{df}If $z\in {\cal{MS}}^{aa}(H)$ then let 
\[z^-=\{x\in H:2z-x\in H,x\leq z\},\ z^+=\{y\in H:2z-y\in H,z\leq y\}.\]
\end{df}

\begin{ex}${\cal{MS}}^{aa}$ is not of finite order for each set: Let $H=\{\frac{1}{n}:n\in\mathbb{N}\}$. Then ${\cal{MS}}^{(n)}(H)\ne{\cal{MS}}^{(n+1)}(H)\ (n\in\mathbb{N})$.
\end{ex}

\begin{prp}If ${\cal{MS}}={\cal{MS}}^{aa}$ then ${\cal{MS}}^{(\infty)}(H)$ is a dense subset of $[\inf H,\sup H]$.
\end{prp}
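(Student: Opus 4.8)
The plan is to exploit the fact that, since $x=y$ is permitted in Definition~\ref{dmsaa}, every application of ${\cal{MS}}^{aa}$ is extensive, i.e.\ $K\subseteq{\cal{MS}}^{aa}(K)$ for all $K$ (take $x=y$). Consequently the iterates form an increasing tower $H={\cal{MS}}^{(0)}(H)\subseteq{\cal{MS}}^{(1)}(H)\subseteq\cdots$, so in particular $H\subseteq{\cal{MS}}^{(\infty)}(H)$. Internality of the mean-set gives ${\cal{MS}}^{aa}(K)\subseteq[\inf K,\sup K]$, and extensivity forces $\inf{\cal{MS}}^{aa}(K)=\inf K$, $\sup{\cal{MS}}^{aa}(K)=\sup K$; hence $\inf$ and $\sup$ are unchanged along the whole tower and ${\cal{MS}}^{(\infty)}(H)\subseteq[\inf H,\sup H]$. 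It therefore remains only to prove density.

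The key observation I would isolate is that $S:={\cal{MS}}^{(\infty)}(H)$ is closed under taking midpoints. Indeed, if $p,q\in S$ then $p\in{\cal{MS}}^{(m)}(H)$ and $q\in{\cal{MS}}^{(m')}(H)$ for some $m,m'$; by extensivity both lie in ${\cal{MS}}^{(N)}(H)$ with $N=\max\{m,m'\}$, so $\frac{p+q}{2}\in{\cal{MS}}^{aa}\bigl({\cal{MS}}^{(N)}(H)\bigr)={\cal{MS}}^{(N+1)}(H)\subseteq S$. Next I would record the elementary lemma that a midpoint-closed set containing two points $a\le b$ contains every dyadic subdivision point $a+\frac{k}{2^n}(b-a)$ with $0\le k\le 2^n$. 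This follows by induction on $n$: the midpoints of consecutive level-$n$ points produce exactly the new level-$(n+1)$ points $a+\frac{2k+1}{2^{n+1}}(b-a)$. Since these dyadic points are dense in $[a,b]$, $S$ is dense in $[a,b]$ for every pair $a,b\in H$.

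Finally I would deduce density in the whole interval. The case $\inf H=\sup H$ is trivial, so assume $\inf H<\sup H$ and fix $x\in[\inf H,\sup H]$ together with a small $\varepsilon>0$. Choose $a,b\in H$ with $a<\inf H+\varepsilon$ and $b>\sup H-\varepsilon$ (possible by definition of $\inf$ and $\sup$); for small $\varepsilon$ we have $a\le b$. If $a\le x\le b$, the previous step supplies a point of $S$ within $\varepsilon$ of $x$. If $x<a$, then $a\in H\subseteq S$ and $a-x\le a-\inf H<\varepsilon$, so $a$ approximates $x$; the case $x>b$ is symmetric. Hence every point of $[\inf H,\sup H]$ is a limit of points of $S$, which is the desired density.

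The argument is not deep, but the one place I would be careful is exactly this last step: $\inf H$ and $\sup H$ need not belong to $H$, so they are not available as midpoint seeds and must instead be approximated by genuine elements of $H$. Everything else reduces to the midpoint-closure of ${\cal{MS}}^{(\infty)}(H)$ and the density of dyadic subdivisions, both of which are routine once extensivity is invoked to align the iteration levels.
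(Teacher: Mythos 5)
Your proof is correct and follows essentially the same route as the paper's: both arguments reduce density to repeated bisection between two points $a,b\in H$, yielding the dyadic subdivision points of $[a,b]$ inside the iterates (the paper phrases this as ${\cal{MS}}^{(n)}(H)$ containing a partition of $[a,b]$ of mesh at most $\frac{b-a}{2^n}$, you phrase it as midpoint-closure of the union plus density of dyadic points). Your treatment of the endpoints $\inf H,\sup H$, which need not lie in $H$, is slightly more explicit than the paper's, but the underlying idea is identical.
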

\begin{proof}Let $x\in(\inf H,\sup H)$. Then there are $a,b\in H$ such that $a<x<b$. Clearly $a,b,\frac{a+b}{2}\in {\cal{MS}}^{(1)}(H)$. By induction one can readily show that ${\cal{MS}}^{(n)}(H)$ contains a partition of $[a,b]$ finer than $\frac{b-a}{2^n}$ which gives that $x$ is an accumulation point of ${\cal{MS}}^{(\infty)}(H)=\bigcup\limits_{i=0}^{\infty}{\cal{MS}}^{(i)}(H)$.
\end{proof}

${\cal{MS}}^{aas}$ behaves in a different way as the next example shows.

\begin{ex}$({\cal{MS}}^{aas})^{(n)}(\{-1,0,1\})=\{0,\pm\frac{1}{2^n}\}$, hence $({\cal{MS}}^{aas})^{(\infty)}(\{-1,0,1\})=\{0\}\cup\{\pm\frac{1}{2^n}:n\in\mathbb{N}\}$.
\end{ex}

\begin{prp}Let $H\subset\mathbb{R}$ be bounded. Then \[({\cal{MS}}^{aa}(H))'=\Big\{\frac{x+y}{2}:x,y\in H'\ or\ (x\in H',\ y\in H)\Big\}.\]
\end{prp}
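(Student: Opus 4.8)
The plan is to prove the claimed identity by double inclusion, after first reformulating the right-hand side. Writing $M={\cal{MS}}^{aa}(H)$ and using that the midpoint is symmetric in its two arguments, the right-hand side equals
\[ R=\Big\{\tfrac{x+y}{2}: x\in H',\ y\in cl(H)\Big\}, \]
since demanding one coordinate in $H'$ and the other in $cl(H)=H\cup H'$ recovers precisely the two listed alternatives (both in $H'$, or one in $H'$ and one in $H$). Both inclusions will exploit that $H$ is bounded.

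For $R\subseteq M'$: fix $z=\frac{x+y}{2}$ with $x\in H'$ and $y\in cl(H)$. I choose $y_n\in H$ with $y_n\to y$ (constant if $y\in H$). For each $n$, since $x\in H'$ the set $B(x,1/n)\cap H$ is infinite, so I can pick $x_n\in H$ with $0<|x_n-x|<1/n$ and $x_n\neq x+y-y_n$; the second requirement is achievable because it forbids only one value while infinitely many are available. Then $\frac{x_n+y_n}{2}\in M$ converges to $\frac{x+y}{2}=z$, and the choice $x_n\neq x+y-y_n$ forces $x_n+y_n\neq 2z$, so every term differs from $z$. Hence $z\in M'$.

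For $M'\subseteq R$: take $z\in M'$ and a sequence $z_n=\frac{x_n+y_n}{2}\in M$ with $x_n,y_n\in H$, $z_n\neq z$ and $z_n\to z$. Boundedness of $H$ lets me pass to a subsequence along which $x_n\to x$ and $y_n\to y$; then $x,y\in cl(H)$ and $z=\frac{x+y}{2}$. It remains to show that at least one of $x,y$ lies in $H'$. Suppose neither does; then both $x$ and $y$ are isolated points of $H$, so for some $\varepsilon>0$ we have $B(x,\varepsilon)\cap H=\{x\}$ and $B(y,\varepsilon)\cap H=\{y\}$. Since $x_n\to x$ and $y_n\to y$ with $x_n,y_n\in H$, eventually $x_n=x$ and $y_n=y$, whence $z_n=z$ for large $n$, contradicting $z_n\neq z$. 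Therefore one coordinate, say $x$, lies in $H'$ while $y\in cl(H)$, so $z\in R$.

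The main obstacle is the reverse inclusion: its content is that a nontrivial accumulation of midpoints must come from genuine accumulation in at least one coordinate, and this is exactly where $z_n\neq z$, combined with boundedness (to extract convergent coordinate subsequences), does the work. In the forward inclusion the only delicate point is keeping the approximating midpoints off the single value $z$, which is handled by the one-value dodge available precisely because $x$ is an accumulation point of $H$.
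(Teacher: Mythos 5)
Your proof is correct and takes essentially the same route as the paper's: both inclusions, with Bolzano--Weierstrass applied to the coordinate sequences $(x_n),(y_n)$ for the reverse direction. You are in fact somewhat more careful than the paper at two points: in the forward inclusion you explicitly arrange the approximating midpoints to avoid the value $z$ (the paper dismisses this as ``clearly''), and in the reverse inclusion you replace the paper's loose case split on whether the sequences are constant with the cleaner contradiction that two isolated limit points would force $z_n=z$ eventually.
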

\begin{proof}If $x,y\in H'$ or $(x\in H',\ y\in H)$ then clearly $\frac{x+y}{2}\in ({\cal{MS}}^{aa}(H))'$.

Let $p\in({\cal{MS}}^{aa}(H))'$. Then there are sequences $(x_n),(y_n)$ such that $x_n,y_n\in H$ and $\frac{x_n+y_n}{2}\to p$. By boundedness we can assume that that $x_n\to x,y_n\to y$. Evidently both sequence cannot be constant. If none of them is constant then we get the first case ($x,y\in H'$). If one sequence is constant then we get the second case $(x\in H',\ y\in H)$. 
\end{proof}

\begin{ex}We cannot omit boundedness. Let $H=\{-n-\frac{1}{n},n+\frac{2}{n}:n\in\mathbb{N}\}$. Clearly $H'=\emptyset,\ 0\in({\cal{MS}}^{aa}(H))'$.\qed
\end{ex}

\begin{prp}If $x\in H'$ then $x\in({\cal{MS}}^{aas}(H))'$.
\end{prp}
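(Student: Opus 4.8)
The plan is to realize $x$ as a limit of midpoints of pairs of distinct points of $H$, with both points chosen strictly on one side of $x$ so that the resulting midpoints stay away from $x$. This one-sided choice is the whole point of the argument, so I will set it up carefully.

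First I would use the hypothesis $x\in H'$ to pick a sequence $(x_n)\subset H$ of pairwise distinct points, each different from $x$, with $x_n\to x$. Since infinitely many of the $x_n$ must lie on the same side of $x$, I may pass to a subsequence and assume that all $x_n$ lie strictly on one side, say $x_n>x$ for every $n$, and moreover that $(x_n)$ is strictly monotone (decreasing to $x$). Passing to this subsequence costs nothing, since we only need to produce points of ${\cal{MS}}^{aas}(H)$ arbitrarily close to $x$.

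Next I would form, for each $n$, the midpoint $z_n=\frac{x_n+x_{n+1}}{2}$. Because $x_n\neq x_{n+1}$, the pair $(x_n,x_{n+1})$ is admissible for the strict average, so $z_n\in{\cal{MS}}^{aas}(H)$. Because both averaged points exceed $x$, we get $z_n>x$, hence $z_n\neq x$; and from $x<z_n<\max\{x_n,x_{n+1}\}\to x$ the squeeze gives $z_n\to x$. Thus every punctured neighbourhood of $x$ contains some $z_n$, which is exactly the statement $x\in({\cal{MS}}^{aas}(H))'$.

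The only genuine obstacle is the danger of the midpoint collapsing onto $x$: if one averaged a point $x_n<x$ against a point $x_m>x$, one could well land on $\frac{x_n+x_m}{2}=x$, and such a value does not witness $x$ as an accumulation point (it is the point itself, and for the strict average one must also keep the two arguments distinct). Restricting to a single side of $x$ is precisely what rules this out, and this is why the case analysis leading to the one-sided monotone subsequence is the essential step rather than a mere technicality.
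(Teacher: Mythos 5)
Your proof is correct and follows essentially the same route as the paper: both arguments extract a one-sided, strictly monotone sequence of distinct points of $H$ converging to $x$ and observe that midpoints of distinct pairs from that sequence lie in ${\cal{MS}}^{aas}(H)$, differ from $x$, and converge to $x$. You merely make explicit (via the consecutive midpoints $z_n=\frac{x_n+x_{n+1}}{2}$) the step the paper dismisses as ``clearly''.
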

\begin{proof}If $x\in H'$ then there is $(x_n)$ such that $\forall n\ x_n\in H,\ x_n\ne x_m\ (n\ne m)$ and $x_n\to x$ moreover either $\forall n\ x_n<x$ or $\forall n\ x_n>x$ and $|x-x_n|$ is strictly decreasing. Then clearly $x\in({\cal{MS}}^{aas}(\{x_n:n\in\mathbb{N}\}))'$.
\end{proof}

\begin{prp}${\cal{MS}}^{aa}$ and ${\cal{MS}}^{aas}$ is not strong internal, not finite, not strong-monotone, not convex. ${\cal{MS}}^{aa}$ is convex while ${\cal{MS}}^{aas}$ is not. They are both monotone, increasing, translation-invariant, reflection-invariant, homogeneous. $H\subset{\cal{MS}}^{aa}(H)$ holds.
\end{prp}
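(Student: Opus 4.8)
The plan is to dispatch the invariance and order properties by direct computation, and then concentrate on the four substantive claims: failure of strong internality (and hence of strong monotonicity), convexity of ${\cal{MS}}^{aa}$ against its failure for ${\cal{MS}}^{aas}$, and ordinary monotonicity.

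First I would record the single identity behind every invariance: for an affine map $\phi(t)=\alpha t+\beta$ one has $\frac{\phi(x)+\phi(y)}{2}=\phi\!\left(\frac{x+y}{2}\right)$, and for $\alpha\ne 0$ the map $\phi$ is a bijection of $\mathbb{R}$ preserving the relation $x\ne y$. Taking $\phi=T_s$, then $\phi(t)=t+x$, then $\phi(t)=\alpha t$ with $\alpha>0$, yields reflection-invariance, translation-invariance and homogeneity for both ${\cal{MS}}^{aa}$ and ${\cal{MS}}^{aas}$; since each such $\phi$ sends distinct pairs to distinct pairs, the ``$x\ne y$'' constraint is respected in the ${\cal{MS}}^{aas}$ case. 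Monotonicity in the containment sense (``increasing'') is cheaper still: if $H\subset K$ then every (distinct) pair in $H$ is a (distinct) pair in $K$, so ${\cal{MS}}(H)\subset{\cal{MS}}(K)$. Finally $H\subset{\cal{MS}}^{aa}(H)$ is the observation $h=\frac{h+h}{2}$, which also exhibits an infinite image whenever $H$ is infinite; for ${\cal{MS}}^{aas}$ a set such as $H=\{0\}\cup\{1/n\}$ has infinitely many distinct midpoints, so neither map is finite.

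For the failure of strong internality I would use $H=\{1/n:n\in\mathbb{N}\}\cup\{2\}$. Its only accumulation point is $0$, so $\varliminf H=\varlimsup H=0$, whereas $\frac{2+1}{2}=\frac32$ is the midpoint of the distinct pair $2,1$ and hence lies in both ${\cal{MS}}^{aa}(H)$ and ${\cal{MS}}^{aas}(H)$ but not in $[0,0]$. Thus neither is strong internal; and since strong monotonicity presupposes strong internality by definition, neither is strong-monotone. For convexity the decisive difference is that ${\cal{MS}}^{aa}$ contains its generating set while ${\cal{MS}}^{aas}$ need not. If $I$ is a closed interval with ${\cal{MS}}^{aa}(H)\subset I$ and $L\subset I$, then $H\subset{\cal{MS}}^{aa}(H)\subset I$ gives $H\cup L\subset I$, and since a closed interval is midpoint-convex, every $\frac{x+y}{2}$ with $x,y\in H\cup L$ lies in $I$, i.e. ${\cal{MS}}^{aa}(H\cup L)\subset I$. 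For ${\cal{MS}}^{aas}$ I would exhibit $H=\{0,2\}$ with ${\cal{MS}}^{aas}(H)=\{1\}\subset I:=[\tfrac34,\tfrac54]$ and $L=\{1\}\subset I$: then ${\cal{MS}}^{aas}(H\cup L)={\cal{MS}}^{aas}(\{0,1,2\})=\{\tfrac12,1,\tfrac32\}\not\subset I$, so convexity fails.

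The remaining and most delicate claim is ordinary monotonicity; assume $\sup H_1\le\inf H_2$. For ${\cal{MS}}^{aa}$ it is painless: the extreme midpoints come from equal extreme elements, so $\inf{\cal{MS}}^{aa}(H)=\inf H$ and $\sup{\cal{MS}}^{aa}(H)=\sup H$, and the required chain collapses to $\inf(H_1\cup H_2)=\inf H_1$ and $\sup(H_1\cup H_2)=\sup H_2$. For ${\cal{MS}}^{aas}$ one cannot use endpoint values, so instead I would show directly that every element of ${\cal{MS}}^{aas}(H_1\cup H_2)$ lies in $[\inf{\cal{MS}}^{aas}(H_1),\ \sup{\cal{MS}}^{aas}(H_2)]$, which yields all three required inequalities at once. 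A midpoint from two elements of $H_1$ (resp. $H_2$) is automatically $\geq\inf{\cal{MS}}^{aas}(H_1)$ (resp. $\leq\sup{\cal{MS}}^{aas}(H_2)$); the work is the mixed pair $x\in H_1,\ y\in H_2$, where $y\ge\inf H_2\ge\sup H_1\ge x$ forces $\frac{x+y}{2}\ge\frac{x+\sup H_1}{2}\ge\inf{\cal{MS}}^{aas}(H_1)$, the last step comparing $x$ with elements of $H_1$ near $\sup H_1$. The main obstacle is exactly this last inequality when $\sup H_1$ is not attained, which I would resolve by approximating $\sup H_1$ through a genuine distinct pair in $H_1$, together with the degenerate bookkeeping when $H_1$ or $H_2$ has fewer than two points and its mean-set is empty; the latter I would handle by restricting, as the definition of monotonicity implicitly requires, to sets whose mean-sets are nonempty. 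The symmetric argument controls the supremum, completing monotonicity of ${\cal{MS}}^{aas}$.
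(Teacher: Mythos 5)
Your proof is correct and follows essentially the same route as the paper: the convexity of ${\cal{MS}}^{aa}$ via $H\subset{\cal{MS}}^{aa}(H)$ and midpoint-closure of intervals is exactly the paper's argument, and your counterexamples for non-strong-internality, non-finiteness and non-convexity of ${\cal{MS}}^{aas}$ differ only in the specific numbers chosen. The paper dismisses monotonicity and the invariances as trivial consequences of properties of the arithmetic mean, whereas you spell out the (correct) argument, including the only mildly delicate mixed-pair case for ${\cal{MS}}^{aas}$.
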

\begin{proof}The example $H=\{\frac{1}{n}:n\in\mathbb{N}\}\cup\{-\frac{1}{n}:n\in\mathbb{N}\}$ shows that ${\cal{MS}}^{aa}$ is not strong internal, not finite, not strong-monotone (let $H_1=\{\frac{1}{n}:n\in\mathbb{N}\},H_2=\{-\frac{1}{n}:n\in\mathbb{N}\}$). ${\cal{MS}}^{aa}$ is covex since if ${\cal{MS}}^{aa}(H)\subset I$ then $H\subset I$ for an interval $I$. For showing that ${\cal{MS}}^{aas}$ is not convex let $H=\{-1,2\},I=[0,1],L=\{0\}$.

The remaining statements are trivial consequences of the properties of the arithmetic mean. The last assertion $H\subset{\cal{MS}}^{aa}(H)$ is straightforward.
\end{proof}

\begin{prp}If $H$ is Borel measurable then so are ${\cal{MS}}^{aa}(H),{\cal{MS}}^{aas}(H)$ and $z^-,z^+$. Moreover $\lambda(z^-)=\lambda(z^+)$.
\end{prp}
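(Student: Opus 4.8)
The plan is to treat the two sum-sets and the two fibre sets $z^-,z^+$ by different devices, since the former involve an existential quantifier (hence a projection) while the latter do not. For ${\cal{MS}}^{aa}(H)$ I would write it as a projection of a planar Borel set: since $z\in{\cal{MS}}^{aa}(H)$ iff there is an $x$ with $x\in H$ and $2z-x\in H$, the set ${\cal{MS}}^{aa}(H)$ is the projection onto the first coordinate of
\[B=\{(z,x)\in\mathbb{R}^2: x\in H,\ 2z-x\in H\},\]
and $B$ is Borel because the maps $(z,x)\mapsto x$ and $(z,x)\mapsto 2z-x$ are continuous and $H$ is Borel. For ${\cal{MS}}^{aas}(H)$ one simply adjoins the condition $x\ne 2z-x$ (i.e.\ $x\ne z$), an open restriction that keeps the planar set Borel. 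I expect the main obstacle to lie exactly here: the projection of a Borel set need not be Borel, it is in general only analytic. So I would invoke the classical descriptive set theory fact that analytic (Suslin) sets are Lebesgue measurable. This yields measurability of ${\cal{MS}}^{aa}(H)$ and ${\cal{MS}}^{aas}(H)$; I would also remark that the word \emph{Borel} cannot be strengthened for arbitrary Borel $H$ (though it does hold when, e.g., $H$ is $\sigma$-compact, since then the sum-set is again $\sigma$-compact), so the honest reading of the conclusion is measurability.

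For a \emph{fixed} $z\in{\cal{MS}}^{aa}(H)$ the sets $z^-,z^+$ carry no quantifier and are genuinely Borel. Writing $T_z(x)=2z-x$ for the reflection about $z$, I would observe
\[z^-=H\cap T_z(H)\cap(-\infty,z],\qquad z^+=H\cap T_z(H)\cap[z,+\infty),\]
each of which is a finite intersection of Borel sets, $T_z(H)$ being Borel as the image of $H$ under a homeomorphism; hence $z^-,z^+$ are Borel.

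Finally, for the equality $\lambda(z^-)=\lambda(z^+)$ I would show that $T_z$ maps $z^-$ bijectively onto $z^+$. A direct check gives $T_z(z^-)\subseteq z^+$: if $x\in H$, $2z-x\in H$ and $x\le z$, then $T_z(x)=2z-x$ lies in $H$, satisfies $2z-T_z(x)=x\in H$, and obeys $T_z(x)\ge z$, so $T_z(x)\in z^+$. Since $T_z$ is an involution, the reverse inclusion follows and $T_z(z^-)=z^+$. As $T_z$ is an isometry of $\mathbb{R}$ it preserves Lebesgue measure, whence $\lambda(z^+)=\lambda(T_z(z^-))=\lambda(z^-)$. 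The only genuinely delicate point in the whole argument is the analytic-sets step for the two sum-sets; the statements about $z^-,z^+$ reduce to elementary set algebra together with the measure-invariance of reflection.
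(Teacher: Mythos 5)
Your proof is correct, and in one important respect it is more careful than the paper's own argument. The paper obtains ${\cal{MS}}^{aa}(H)$ as $f(H\times H)$ with $f(x,y)=\frac{x+y}{2}$ and asserts that this continuous image of a Borel set is Borel; as you rightly point out, a continuous image (equivalently, a projection) of a Borel set is in general only analytic, so the honest conclusion is Lebesgue (universal) measurability rather than Borelness, and your appeal to the measurability of Suslin sets is exactly the needed repair. Your formulation as a projection of $B=\{(z,x):x\in H,\ 2z-x\in H\}$ is essentially equivalent to the paper's forward-image formulation, so the geometric idea is the same; the difference is that you flag and close the descriptive-set-theoretic gap. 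For $z^-$ and $z^+$ your route is genuinely different and cleaner: the paper writes $z^-=\pi_x\bigl(f^{-1}(\{z\})\cap H\times H\bigr)\cap(-\infty,z]$, which again passes through a projection (this one is in fact harmless because $\pi_x$ is injective on the line $x+y=2z$, so Luzin--Suslin gives Borelness, but the paper does not say so), whereas your identity $z^-=H\cap T_z(H)\cap(-\infty,z]$ exhibits $z^-$ directly as a finite intersection of Borel sets with no projection at all. The final step, $z^+=T_z(z^-)$ together with the measure-invariance of the reflection, coincides with the paper's argument.
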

\begin{proof}Let $f:H\times H\to\mathbb{R}, f(x,y)=\frac{x+y}{2}.$ Then $f$ is continuous hence $f(H\times H)={\cal{MS}}^{aa}(H)$ is Borel measurable because $H\times H$ is Borel measurable as well. For ${\cal{MS}}^{aas}(H)$ it is enough to note that ${\cal{MS}}^{aas}(H)=f(H\times H-\{(x,x):x\in\mathbb{R}\})$.

Set $\pi_x(x,y)=x$ the projection to the first coordinate. We then get the statement for $z^-$ by $z^-=\pi_x\big(f^{-1}(\{z\})\cap H\times H\big)\cap(-\infty,z]$. Similarly for $z^+$.

For proving $\lambda(z^-)=\lambda(z^+)$ it is enough to observe that $z^+=2z-z^-=\{2z-x:x\in z^-\}$.
\end{proof}

We can investigate $g(z)=\lambda(z^-)\ (z\in {\cal{MS}}^{aa}(H))$ and ask if it has a maximum and if yes then there is a unique $z$ where it is reached.

First let us fix some notation: $H\triangle K=(H-K)\cup (K-H)\ (H,K\subset\mathbb{R})$, $\pi_x(x,y)=x$ is the projection to the first coordinate, set $f(x,y)=\frac{x+y}{2}$, $H_z=f^{-1}(\{z\})\cap H\times H$ when $H\subset\mathbb{R},\ z\in\mathbb{R}$, and $\lambda_1$ denotes the 1-dimensional Lebesgue measure if we measure a line segment in the plane.

\begin{lem}\label{l2ihzc}Let $I,J$ be two open interval. Then $z\mapsto\lambda_1\big(f^{-1}(\{z\})\cap I\times J\big)$ is continuous.
\end{lem}
\begin{proof}Actually $f^{-1}(\{z\})\cap I\times J$  is the intersection of $I\times J$ and the line through $(z,z)$ with slope $-1$ and its length is clearly continuous function of $z$.
\end{proof}

\begin{lem}\label{llls}Let $H,K\subset\mathbb{R}$ Borel measurable and $d=\lambda(H\triangle K)$. Then $|\lambda_1(H_z)-\lambda_1(K_z)|\leq2\sqrt{2}d$ ($z\in\mathbb{R}$).
\end{lem}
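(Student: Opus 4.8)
The plan is to reduce the two–dimensional length $\lambda_1(H_z)$ to an honest one–dimensional Lebesgue measure on $\mathbb{R}$, and then to estimate the two resulting quantities with the elementary symmetric–difference inequality.

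First I would observe that $f^{-1}(\{z\})$ is the line through $(z,z)$ of slope $-1$, which I parametrize by its first coordinate via $x\mapsto(x,2z-x)$. Along this line the arc–length element is $\sqrt{2}\,dx$, and the point $(x,2z-x)$ lies in $H\times H$ exactly when $x\in H$ and $2z-x\in H$, i.e. when $x\in H\cap(2z-H)$, where $2z-H=\{2z-h:h\in H\}$. Hence
$$\lambda_1(H_z)=\sqrt{2}\,\lambda\big(H\cap(2z-H)\big),$$
and likewise $\lambda_1(K_z)=\sqrt{2}\,\lambda\big(K\cap(2z-K)\big)$. The claim therefore reduces to showing $|\lambda(A)-\lambda(B)|\le 2d$, where $A=H\cap(2z-H)$ and $B=K\cap(2z-K)$.

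Next I would use two facts. The first is $|\lambda(A)-\lambda(B)|\le\lambda(A\triangle B)$, valid once $\lambda(A\triangle B)$ is finite. The second is the set inclusion $(A_1\cap A_2)\triangle(B_1\cap B_2)\subseteq(A_1\triangle B_1)\cup(A_2\triangle B_2)$, proved by a routine case check. Applying it with $A_1=H,\ B_1=K,\ A_2=2z-H,\ B_2=2z-K$ gives
$$A\triangle B\subseteq(H\triangle K)\cup\big((2z-H)\triangle(2z-K)\big).$$
Since reflection about $z$ preserves Lebesgue measure and commutes with $\triangle$, we have $(2z-H)\triangle(2z-K)=2z-(H\triangle K)$, so its measure is again $d$. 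Thus $\lambda(A\triangle B)\le d+d=2d$, and combining this with the $\sqrt{2}$ factor yields $|\lambda_1(H_z)-\lambda_1(K_z)|\le 2\sqrt{2}\,d$, as required.

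I expect the only genuinely delicate point to be the bookkeeping of infinities: strictly speaking $\lambda_1(H_z)$ and $\lambda_1(K_z)$ are the relevant finite quantities only when $H$ and $K$ are bounded (as they are wherever this lemma is applied), or more generally when $d<\infty$, in which case $A$ and $B$ differ by a set of finite measure and the signed difference $\lambda(A)-\lambda(B)$ is well defined and bounded by $\lambda(A\triangle B)$; the case $d=+\infty$ is trivial. Everything else — the arc–length identity, the symmetric–difference inclusion, and reflection invariance of $\lambda$ — is elementary.
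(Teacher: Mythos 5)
Your proof is correct, and at its core it rests on the same idea as the paper's: control $|\lambda_1(H_z)-\lambda_1(K_z)|$ by the portion of the line $f^{-1}(\{z\})$ lying in $H\times H\,\triangle\,K\times K$, then bound that portion by $2\sqrt{2}d$. The execution differs, and yours is the more watertight. The paper stays in the plane and ends with $\lambda_1\big(f^{-1}(\{z\})\cap(H\times H\,\triangle\,K\times K)\big)\leq 2\sqrt{2}\,\lambda\big(\pi_x(H\times H\,\triangle\,K\times K)\big)=2\sqrt{2}d$; read literally, that last equality is false (take $H=[0,1]$, $K=[0,1+\varepsilon]$: the projection of the L-shaped symmetric difference has measure $1+\varepsilon$ while $d=\varepsilon$). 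The factor $2$ is really accounted for by splitting $H\times H\,\triangle\,K\times K$ into the two slabs $(H\triangle K)\times\mathbb{R}$ and $\mathbb{R}\times(H\triangle K)$, each meeting the line in length at most $\sqrt{2}d$. Your one-dimensional route --- the parametrization $\lambda_1(H_z)=\sqrt{2}\,\lambda\big(H\cap(2z-H)\big)$, the inclusion $(A_1\cap A_2)\triangle(B_1\cap B_2)\subseteq(A_1\triangle B_1)\cup(A_2\triangle B_2)$, and reflection invariance of $\lambda$ --- performs exactly this splitting explicitly and correctly. Your caveats about finiteness are also appropriate but harmless here: in the paper's application $H$ is bounded and $K$ is a finite union of intervals, so every quantity involved is finite.
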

\begin{proof}Evidently 
\[|\lambda_1(H_z)-\lambda_1(K_z)|\leq|\lambda_1\big(f^{-1}(\{z\})\cap (H\times H\triangle K\times K)\big)|\leq\]
\[2\sqrt{2}\lambda(\pi_x(H\times H\triangle K\times K))=2\sqrt{2}d.\qedhere\]
\end{proof}

\begin{lem}\label{lbmawo}Let $H\subset\mathbb{R}$ be bounded Lebesgue measurable, $\epsilon>0$. Then there is $K\subset\mathbb{R}$ such that $K$ is the union of finitely many open intervals and $\lambda(H\triangle K)<\epsilon$.
\end{lem}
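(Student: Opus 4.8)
The plan is to approximate $H$ from outside by an open set and then truncate that open set to finitely many intervals. First I would exploit the boundedness of $H$: fix an open interval $(a,b)\supset H$. By the outer regularity of Lebesgue measure there is an open set $G\supset H$ with $\lambda(G-H)<\frac{\epsilon}{2}$; replacing $G$ by $U=G\cap(a,b)$ keeps $H\subset U$, makes $U$ bounded, and does not increase $\lambda(U-H)$.

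Next I would invoke the structure theorem for open subsets of $\mathbb{R}$ and write $U=\bigcup_{i=1}^\infty I_i$ as a countable union of pairwise disjoint open intervals. Because $U$ is bounded, $\lambda(U)=\sum_{i=1}^\infty\lambda(I_i)<+\infty$, so this series converges and its tail tends to $0$. I would then choose $N$ with $\sum_{i=N+1}^\infty\lambda(I_i)<\frac{\epsilon}{2}$ and set $K=\bigcup_{i=1}^N I_i$, which is exactly the finite union of open intervals the statement demands.

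Finally I would bound the symmetric difference in two pieces. Since $K\subset U$ we get $K-H\subset U-H$, so $\lambda(K-H)\leq\lambda(U-H)<\frac{\epsilon}{2}$; and since $H\subset U$ we get $H-K\subset U-K=\bigcup_{i=N+1}^\infty I_i$, so $\lambda(H-K)\leq\sum_{i=N+1}^\infty\lambda(I_i)<\frac{\epsilon}{2}$. Adding these (the two differences being disjoint) yields $\lambda(H\triangle K)=\lambda(H-K)+\lambda(K-H)<\epsilon$, as required. There is no serious obstacle here; the only point that genuinely uses the hypotheses is boundedness, which guarantees $\lambda(U)<+\infty$ and hence a summable tail — without it the truncation step would have nothing to control.
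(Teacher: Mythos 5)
Your proposal is correct and follows essentially the same route as the paper: approximate $H$ from outside by a bounded open set with $\lambda(G-H)<\frac{\epsilon}{2}$, decompose it into disjoint open intervals, truncate the summable tail, and bound the two halves of the symmetric difference separately. The only difference is that you spell out why the open set can be taken bounded (intersecting with an interval containing $H$), which the paper leaves implicit.
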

\begin{proof}It is known that there is a bounded open $G$ such that $H\subset G$ and $\lambda(G-H)<\frac{\epsilon}{2}$. If $G=\bigcup\limits_{i=1}^{\infty}I_i$ where $I_i$ are disjoint open intervals then let $j\in\mathbb{N}$ be chosen such that $\sum\limits_{i=j+1}^{\infty}\lambda(I_i)<\frac{\epsilon}{2}$. Then $K=\bigcup\limits_{i=1}^{j}I_i$ will suit.
\end{proof}

\begin{thm}\label{tgcont}Let $H\subset\mathbb{R}$ be bounded Borel measurable. Let $g:\mathbb{R}\to\mathbb{R},$
\[g(z)=\begin{cases}
\lambda(z^-)&\text{when }z\in {\cal{MS}}^{aa}(H)\\
0&\text{otherwise}.
\end{cases}\]
Then $g$ is continuous.
\end{thm}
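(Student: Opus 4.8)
The plan is to reduce the claim to the continuity of the single map $z\mapsto\lambda_1(H_z)$, for which Lemmas \ref{l2ihzc}, \ref{llls} and \ref{lbmawo} are tailor-made, and then assemble these by a uniform-approximation argument.

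First I would record the identity
\[g(z)=\frac{1}{2\sqrt{2}}\,\lambda_1(H_z)\qquad(z\in\mathbb{R}).\]
To obtain it, fix $z$ and put $S=\pi_x(H_z)=\{x\in H:2z-x\in H\}$. The set $S$ is symmetric about $z$ (if $x\in S$ then $2z-x\in S$), so $S=z^-\cup z^+$ with $z^-\cap z^+\subset\{z\}$ of measure zero; combined with $\lambda(z^-)=\lambda(z^+)$, already established earlier, this gives $\lambda(S)=2\lambda(z^-)$. Since $H_z$ is the image of $S$ under the map $x\mapsto(x,2z-x)$, which stretches lengths by the factor $\sqrt{2}$ (the fibre lies on a line of slope $-1$), we get $\lambda_1(H_z)=\sqrt{2}\,\lambda(S)=2\sqrt{2}\,\lambda(z^-)=2\sqrt{2}\,g(z)$ for $z\in{\cal{MS}}^{aa}(H)$; and for $z\notin{\cal{MS}}^{aa}(H)$ both sides vanish because $H_z=\emptyset$. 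Hence it suffices to prove that $z\mapsto\lambda_1(H_z)$ is continuous.

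Second, I would settle the case where $H$ is a finite union of open intervals. If $K=\bigcup_{i=1}^{n}I_i$ with the $I_i$ disjoint open intervals, then $K\times K=\bigcup_{i,j}I_i\times I_j$ is a disjoint union of open rectangles, so $\lambda_1(K_z)=\sum_{i,j}\lambda_1\big(f^{-1}(\{z\})\cap I_i\times I_j\big)$. Each summand is continuous in $z$ by Lemma \ref{l2ihzc}, and a finite sum of continuous functions is continuous; thus $z\mapsto\lambda_1(K_z)$ is continuous.

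Finally, I would pass from $K$ to $H$ by uniform approximation. Given $\epsilon>0$, Lemma \ref{lbmawo} yields a finite union of open intervals $K$ with $\lambda(H\triangle K)<\epsilon/(6\sqrt{2})$, whence Lemma \ref{llls} gives $|\lambda_1(H_z)-\lambda_1(K_z)|<\epsilon/3$ uniformly in $z$. Since $\lambda_1(K_{\cdot})$ is continuous at any fixed $z_0$ by the previous step, the usual three-term estimate yields $|\lambda_1(H_z)-\lambda_1(H_{z_0})|<\epsilon$ for $z$ near $z_0$; that is, $\lambda_1(H_{\cdot})$ is a uniform limit of continuous functions and therefore continuous, so $g=\frac{1}{2\sqrt{2}}\lambda_1(H_{\cdot})$ is continuous. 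The main obstacle is the first step: correctly pinning down the constant relating $g(z)$ to $\lambda_1(H_z)$, which requires both the geometric factor $\sqrt{2}$ from measuring along a slope-$-1$ line and the factor $2$ coming from the symmetry of $S$ about $z$, together with the measure-zero bookkeeping at $x=z$. Once this identity is in place, the remaining steps are a routine uniform-convergence argument built directly on the three preceding lemmas.
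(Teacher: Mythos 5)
Your proposal is correct and follows essentially the same route as the paper: reduce $g$ to $\frac{1}{2\sqrt{2}}\lambda_1(H_{\cdot})$, establish continuity for finite unions of open intervals via Lemma \ref{l2ihzc}, and conclude by the uniform $2\sqrt{2}\,\lambda(H\triangle K)$ bound from Lemmas \ref{llls} and \ref{lbmawo} together with the three-term estimate. You in fact justify the constant $\frac{1}{2\sqrt{2}}$ more carefully than the paper does, which simply asserts the identity.
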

\begin{proof}We already noted that \[\lambda(z^-)=\lambda\big(\pi_x(H_z)\cap(-\infty,z]\big)\] and it is true for every $z\in\mathbb{R}$. From that $\lambda(z^-)=\frac{1}{2\sqrt{2}}\lambda_1\big(H_z\big)$. Hence it is enough to show that $z\mapsto\lambda_1\big(H_z\big)$ is continuous.

Let $\epsilon>0$ and $z_0\in\mathbb{R}$ be given. We need a $\delta>0$ such that 
$|\lambda_1\big(H_{z_0}\big)-\lambda_1\big(H_z\big)|<\epsilon$
whenever $|z_0-z|<\delta$.

By \ref{lbmawo} there is $K\subset\mathbb{R}$ such that $K$ is the union of finitely many open intervals and $\lambda(H\triangle K)<\frac{\epsilon}{6\sqrt{2}}$. Let $K=\bigcup\limits_{i=1}^nI_i,\ I_i\cap I_j=\emptyset\ (i\ne j)$.
Clearly $\lambda_1(K_z)=\sum\limits_{i,j=1}^n\lambda_1\big(f^{-1}(\{z\})\cap I_i\times I_j\big)$ hence $z\mapsto\lambda_1(K_z)$ is continuous because of \ref{l2ihzc}. 

Obviously
\[|\lambda_1(H_{z_0})-\lambda_1(H_z)|<
|\lambda_1(H_{z_0})-\lambda_1(K_{z_0})|+
|\lambda_1(K_{z_0})-\lambda_1(K_z)|+
|\lambda_1(K_z)-\lambda_1(H_z)|.\]
By \ref{llls} $|\lambda_1(H_{z_0})-\lambda_1(K_{z_0})|<\frac{\epsilon}{3}$ and also $|\lambda_1(K_z)-\lambda_1(H_z)|<\frac{\epsilon}{3}$. Now by the continuity of $z\mapsto\lambda_1(K_z)$ we can choose $\delta>0$ such that $|\lambda_1(K_{z_0})-\lambda_1(K_z)|<\frac{\epsilon}{3}$ holds whenever $|z_0-z|<\delta$.
\end{proof}

\begin{cor}Let $H\subset\mathbb{R}$ be bounded Borel measurable. Then $g:{\cal{MS}}^{aa}(H)\to\mathbb{R},\ g(z)=\lambda(z^-)$ takes its maximum value.
\end{cor}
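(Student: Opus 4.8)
The plan is to leverage the continuity established in Theorem \ref{tgcont}. The key difficulty is that the domain ${\cal{MS}}^{aa}(H)$ need not be closed, so the classical extreme value theorem cannot be applied to the restriction $g|_{{\cal{MS}}^{aa}(H)}$ directly. The way around this is precisely why the theorem extends $g$ by zero to all of $\mathbb{R}$: I would work with that extended function, which is continuous on the whole line and, as I will note, vanishes off a compact interval.

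First I would record two elementary facts about the extended $g$. On one hand $g\geq0$ everywhere, since on ${\cal{MS}}^{aa}(H)$ the value $g(z)=\lambda(z^-)$ is a Lebesgue measure and $g=0$ elsewhere. On the other hand, internality gives ${\cal{MS}}^{aa}(H)\subset[\inf H,\sup H]$, so the set $\{z:g(z)>0\}$ is contained in the compact interval $[\inf H,\sup H]$.

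Next I would apply the extreme value theorem on this compact interval: by Theorem \ref{tgcont} the function $g$ is continuous on $[\inf H,\sup H]$, hence attains a maximum value $M=g(z_0)$ at some $z_0\in[\inf H,\sup H]$. Since $g\geq0$ everywhere we have $M\geq0$, and since $g$ vanishes outside $[\inf H,\sup H]$, the point $z_0$ is in fact a global maximizer of $g$ on all of $\mathbb{R}$; in particular $M=\sup_{z\in{\cal{MS}}^{aa}(H)}g(z)$.

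It remains to check that this maximum is attained at a point of ${\cal{MS}}^{aa}(H)$ itself, and this final dichotomy is the only step requiring care. If $M>0$, then $g(z_0)>0$ forces $z_0\in{\cal{MS}}^{aa}(H)$, since $g$ is zero off that set, and we are done. If $M=0$, then $g$ is identically $0$ on the nonempty set ${\cal{MS}}^{aa}(H)$, so the maximum value $0$ is attained at every one of its points. Thus in either case the supremum is realized inside ${\cal{MS}}^{aa}(H)$; nonnegativity of $g$ together with its vanishing off ${\cal{MS}}^{aa}(H)$ is exactly what prevents the maximizer from escaping into the closure of ${\cal{MS}}^{aa}(H)$.
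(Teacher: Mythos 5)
Your proof is correct and follows essentially the same route as the paper's: extend $g$ by zero, invoke the continuity from Theorem \ref{tgcont} together with the extreme value theorem on a compact interval containing ${\cal{MS}}^{aa}(H)$, and then observe that since $g$ vanishes off ${\cal{MS}}^{aa}(H)$ the maximum is attained at a point of that set. Your explicit dichotomy between $M>0$ and $M=0$ is a slightly more careful rendering of the paper's final sentence, but it is the same argument.
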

\begin{proof}By \ref{tgcont} the extended $g$ is continuous on $[\inf {\cal{MS}}^{aa}(H),\sup {\cal{MS}}^{aa}(H)]$ hence it takes its maximum in this compact interval. If $z\not\in {\cal{MS}}^{aa}(H)$ then $g(z)=0$ hence the maximum has to be taken in a point of ${\cal{MS}}^{aa}(H)$.
\end{proof}

The following example shows that $z\mapsto\lambda(z^-)$ may take its maximum in several points, moreover we cannot state that the set where its maximum is taken is connected.

\begin{ex}Let $H=[0,2]\cup[5,6]$. Then $Avg(H)=2.5$.

One can easily show that ${\cal{MS}}^{aa}(H)=[0,2]\cup[2.5,4]\cup[5,6]$ and 
\[\lambda(z^-)=
\begin{cases}
z & \text{if }0\leq z\leq 1 \\
2-z & \text{if }1\leq z\leq 2 \\
2z-5 & \text{if }2.5\leq z\leq 3 \\
1 & \text{if }3\leq z\leq 3.5 \\
8-2z & \text{if }3.5\leq z\leq 4 \\
z-5 & \text{if }5\leq z\leq 5.5 \\
6-z & \text{if }5.5\leq z\leq 6 \\
\end{cases},\]
and $\lambda(z^-)=0$ where not specified.

Hence the maximum value (that is 1) is taken in $\{1\}\cup[3,3.5]$.
\qed
\end{ex}

\begin{df}For $0<\lambda(H)<+\infty$ let \[{\cal{MS}}^{hf}(H)=\{x:\lambda(H^{x-})=\lambda(H^{x+})\}.\]
\end{df}

\begin{prp}\label{pmshf1}Let $0<\lambda(H)<+\infty$. Then

(1) ${\cal{MS}}^{hf}(H)\ne\emptyset$

(2) ${\cal{MS}}^{hf}\subset[\varliminf H,\varlimsup H]$ i.e. it is strongly-internal

(3) ${\cal{MS}}^{hf}(H)$ is a finite closed interval

(4) $\lambda\big(H\cap {\cal{MS}}^{hf}(H)\big)=0$

(5) Let $p\in{\cal{MS}}^{hf}(H)$. Then ${\cal{MS}}^{hf}(H)=\{p\}\iff\forall\delta>0\ \lambda\big(H\cap(p-\delta,p)\big)>0$ and $\lambda\big(H\cap(p,p+\delta)\big)>0$.
\end{prp}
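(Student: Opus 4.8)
The plan is to reduce every part to the study of the single monotone function $F(x)=\lambda\big(H\cap(-\infty,x]\big)$. Since $(-\infty,x]\cup[x,+\infty)=\mathbb{R}$ and the two pieces overlap only in the null set $\{x\}$, we have $\lambda(H^{x-})+\lambda(H^{x+})=\lambda(H)$, so the defining condition $\lambda(H^{x-})=\lambda(H^{x+})$ is equivalent to $F(x)=\tfrac12\lambda(H)$. Thus ${\cal{MS}}^{hf}(H)=F^{-1}\big(\{\tfrac12\lambda(H)\}\big)$, and the whole proposition becomes a statement about the level set of $F$ at height $\tfrac12\lambda(H)$. First I would record that $F$ is non-decreasing, that $F(x)\to0$ as $x\to-\infty$ and $F(x)\to\lambda(H)$ as $x\to+\infty$ (continuity of measure, using $\lambda(H)<+\infty$), and that $F$ is continuous: left-continuity follows from $H\cap(x-\delta,x]\downarrow H\cap\{x\}$ (a null set, invoking finiteness of $\lambda(H)$), and right-continuity from $H\cap(x,x+\delta]\downarrow\emptyset$.

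With these facts, (1) is immediate from the intermediate value theorem, since $0<\tfrac12\lambda(H)<\lambda(H)$ lies strictly between the two limiting values of the continuous $F$. For (3) I would use that the level set of a continuous non-decreasing function is closed (by continuity) and is an interval (by monotonicity: if $F(x_1)=F(x_2)=c$ and $x_1<x<x_2$ then $c\le F(x)\le c$); its boundedness follows because $F<\tfrac12\lambda(H)$ far to the left and $F>\tfrac12\lambda(H)$ far to the right. For (4), writing ${\cal{MS}}^{hf}(H)=[c,d]$ and using that $F$ is constant there, I obtain $\lambda\big(H\cap(c,d]\big)=F(d)-F(c)=0$, and since $\{c\}$ is null this gives $\lambda\big(H\cap[c,d]\big)=0$.

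The two parts needing genuine work are (2) and (5). For (2), set $a=\varliminf H=\inf H'$; I would argue that every point of $H$ lying strictly below $a$ is isolated in $H$ (it cannot be an accumulation point of $H$), that such points can accumulate only at $a$, and hence that $H\cap(-\infty,a)$ meets each compact subinterval of $(-\infty,a)$ in a finite set and is therefore countable, so $\lambda\big(H\cap(-\infty,a)\big)=0$; symmetrically $\lambda\big(H\cap(\varlimsup H,+\infty)\big)=0$. Consequently $F(a)=0$ while $F$ already equals $\lambda(H)$ from $\varlimsup H$ onward, so the level set at $\tfrac12\lambda(H)$ can meet neither $(-\infty,a)$ nor $(\varlimsup H,+\infty)$, which is strong-internality. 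This countability-of-isolated-points estimate is the step I expect to be the main obstacle, since it is the only place where a purely topological fact about $\varliminf,\varlimsup$ must be converted into a measure-theoretic one.

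Finally, for (5), the equality ${\cal{MS}}^{hf}(H)=[c,d]=\{p\}$ means precisely $c=d=p$. Since $F$ is non-decreasing with $F(p)=\tfrac12\lambda(H)$, a point $x<p$ lies in the level set iff $F(x)=F(p)$, and otherwise $F(x)<F(p)$; hence $c=p$ is equivalent to $F(x)<F(p)$ for every $x<p$, i.e. to $F(p)-F(p-\delta)=\lambda\big(H\cap(p-\delta,p)\big)>0$ for all $\delta>0$. By the symmetric argument on the right, $d=p$ is equivalent to $\lambda\big(H\cap(p,p+\delta)\big)>0$ for all $\delta>0$. Combining the two equivalences yields the stated characterization, completing the proof.
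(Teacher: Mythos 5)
Your proof is correct and follows essentially the same route as the paper: both reduce all five parts to studying the level set of the cumulative function $f(x)=\lambda(H^{x-})$ at height $\tfrac12\lambda(H)$, using its continuity and monotonicity. You merely supply details the paper leaves implicit (the continuity of $f$, and the countability argument showing $\lambda\big(H\cap(-\infty,\varliminf H)\big)=0$ for part (2)), so there is no substantive difference in approach.
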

\begin{proof}(1) Set $f(x)=\lambda(H^{x-})\ (x\in\mathbb{R})$. Then $\lim\limits_{-\infty}f=0,\ \lim\limits_{+\infty}f=\lambda(H)$ and $f$ is continuous. Therefore there is $x\in\mathbb{R}$ such that $f(x)=\frac{\lambda(H)}{2}$ i.e. $x\in{\cal{MS}}^{hf}(H)$.

\medskip

(2) If $x<\varliminf H$ then $\lambda(H^{x-})=0$ which means that $x\not\in{\cal{MS}}^{hf}(H)$. The other case is similar.

\medskip

(3) Clearly ${\cal{MS}}^{hf}(H)=f^{-1}\big(\frac{\lambda(H)}{2}\big)$ which gives that ${\cal{MS}}^{hf}(H)$ is closed by $f$ being continuous. $f$ being increasing gives that ${\cal{MS}}^{hf}(H)$ has to be connected. ${\cal{MS}}^{hf}(H)$ being infinite interval would contradict to either $\lim\limits_{-\infty}f=0$ or $\lim\limits_{+\infty}f=\lambda(H)$.

\medskip

(4) If $x,y\in\mathbb{R},\ x<y$ then $f(y)-f(x)=\lambda(H\cap(x,y])$. Let now ${\cal{MS}}^{hf}(H)=[a,b]$. Then $0=f(b)-f(a)=\lambda\big(H\cap({\cal{MS}}^{hf}(H)-\{a\})\big)$.

\medskip

(5) To show sufficiency let ${\cal{MS}}^{hf}(H)=[a,b]$. By (3) we get that $a=b=p$. To prove necessity suppose that $\exists\delta>0$ such that $\lambda\big(H\cap(p,p+\delta)\big)=0$. Then $p+\frac{\delta}{2}$ would be in ${\cal{MS}}^{hf}(H)$ too that is a contradiction. The other case is similar.
\end{proof}

\begin{prp}\label{pmshf2}Let ${\cal{MS}}^{hf}(H)=[a,b]$. Then \[Avg({\cal{MS}}^{hf}(H))=Avg(H) \iff Avg(H^{b+})-b=a-Avg(H^{a-}).\]
\end{prp}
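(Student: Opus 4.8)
The plan is to reduce both sides of the claimed equivalence to one algebraic identity by computing the two averages explicitly. First I would record the consequences of Proposition \ref{pmshf1} that drive everything. Since ${\cal{MS}}^{hf}(H)=[a,b]$ is a closed interval by \ref{pmshf1}(3), both endpoints satisfy $a,b\in{\cal{MS}}^{hf}(H)$. The defining equation $\lambda(H^{a-})=\lambda(H^{a+})$ together with $\lambda(H^{a-})+\lambda(H^{a+})=\lambda(H)$ (the overlap being the $\lambda$-null point $a$) yields $\lambda(H^{a-})=\frac{\lambda(H)}{2}$, and the same argument at $b$ gives $\lambda(H^{b+})=\frac{\lambda(H)}{2}$. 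Writing $m=\frac{\lambda(H)}{2}>0$, both tail pieces have measure $m$. Moreover \ref{pmshf1}(4) says $\lambda(H\cap[a,b])=0$, so up to a null set $H$ is the disjoint union of $H^{a-}$ and $H^{b+}$, and each tail is a bounded measurable $1$-set of positive measure, hence a legitimate argument of $Avg$.

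Next I would evaluate the two averages. Because ${\cal{MS}}^{hf}(H)=[a,b]$ is an interval, $Avg({\cal{MS}}^{hf}(H))=\frac{a+b}{2}$. For $Avg(H)$ I would split the defining integral along the decomposition above: the middle portion $H\cap(a,b)$ is null and contributes nothing, so
\[\int_H x\,d\lambda=\int_{H^{a-}}x\,d\lambda+\int_{H^{b+}}x\,d\lambda=m\,Avg(H^{a-})+m\,Avg(H^{b+}),\]
where I used that each tail has measure $m$. Dividing by $\lambda(H)=2m$ gives the key formula
\[Avg(H)=\frac{Avg(H^{a-})+Avg(H^{b+})}{2}.\]

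Finally I would equate the two expressions. The identity $Avg({\cal{MS}}^{hf}(H))=Avg(H)$ becomes $\frac{a+b}{2}=\frac{Avg(H^{a-})+Avg(H^{b+})}{2}$, i.e. $a+b=Avg(H^{a-})+Avg(H^{b+})$, which rearranges at once to $Avg(H^{b+})-b=a-Avg(H^{a-})$; since every step is an equivalence, reading the chain backwards delivers the converse. I do not expect a genuine obstacle here: the proof is a clean consequence of Proposition \ref{pmshf1}, and the only points requiring care are the bookkeeping of the shared endpoints $a,b$ (harmless, as single points are $\lambda$-null) and the verification that $m>0$ so the tail averages are defined, both of which follow immediately from $0<\lambda(H)<+\infty$.
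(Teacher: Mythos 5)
Your proposal is correct and follows essentially the same route as the paper: both use Proposition \ref{pmshf1}(4) together with $\lambda(H^{a-})=\lambda(H^{b+})=\frac{\lambda(H)}{2}$ to obtain $Avg(H)=\frac{Avg(H^{a-})+Avg(H^{b+})}{2}$, then compare with $Avg([a,b])=\frac{a+b}{2}$ and rearrange. Your version merely spells out the bookkeeping (null overlaps at the endpoints, positivity of the tail measures) that the paper leaves implicit.
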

\begin{proof}By \ref{pmshf1} (4) 
\[Avg(H)=\frac{\lambda(H^{a-})Avg(H^{a-})+\lambda(H^{b+})Avg(H^{b+})}{\lambda(H)}=\frac{Avg(H^{a-})+Avg(H^{b+})}{2}\]
because $\lambda(H^{a-})=\lambda(H^{b+})=\frac{\lambda(H)}{2}$. Then $Avg({\cal{MS}}^{hf}(H))=\frac{a+b}{2}$ gives the statement.
\end{proof}

\begin{ex}$Avg({\cal{MS}}^{hf}(H))\ne Avg(H)$ in general. Let $H=[0,2]\cup[3,4]\cup[5,6]$. Then ${\cal{MS}}^{hf}(H)=[2,3],\ Avg([0,2])=1,\ Avg([3,4]\cup[5,6])=4.5$ which gives the statement by \ref{pmshf2}.
\end{ex}

\begin{prp}The mean-set ${\cal{MS}}^{hf}$ is strongly-internal, strong-monotone, convex, translation invariant, reflection invariant, homogeneous, finite-independent. It is not closed, not accumulated, not idempotent, not increasing.
\end{prp}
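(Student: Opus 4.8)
The plan is to reduce every assertion to the monotone continuous function $f_H(x)=\lambda(H^{x-})$ from the proof of \ref{pmshf1}, which satisfies $\lim_{-\infty}f_H=0$, $\lim_{+\infty}f_H=\lambda(H)$ and ${\cal{MS}}^{hf}(H)=f_H^{-1}(\lambda(H)/2)$, a closed interval $[a,b]$ by \ref{pmshf1}(3). Strong-internality is exactly \ref{pmshf1}(2), and since each value is a closed interval I may freely write $\inf{\cal{MS}}^{hf}(H)=a$, $\sup{\cal{MS}}^{hf}(H)=b$ (these agree with $\varliminf,\varlimsup$ in the non-degenerate case). First I would dispose of the invariance-type properties by recording how $f$ transforms: $f_{H+x}(t)=f_H(t-x)$, $f_{T_s(H)}(t)=\lambda(H)-f_H(2s-t)$ and $f_{\alpha H}(t)=\alpha f_H(t/\alpha)$ for $\alpha>0$, each from translation/reflection invariance and scaling of $\lambda$; substituting into the level-set description immediately yields ${\cal{MS}}^{hf}(H+x)={\cal{MS}}^{hf}(H)+x$, ${\cal{MS}}^{hf}(T_s(H))=T_s({\cal{MS}}^{hf}(H))$ and ${\cal{MS}}^{hf}(\alpha H)=\alpha\,{\cal{MS}}^{hf}(H)$. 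Finite-independence is immediate since a finite $V$ is $\lambda$-null, so $f_{H\cup V}=f_{H-V}=f_H$ with unchanged total mass.

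For strong-monotonicity I would assume $\varlimsup H_1\le\varliminf H_2$, set $c=\varlimsup H_1$, and use that the isolated points of any set are countable to get $\lambda(H_1\cap(c,\infty))=0$ and $\lambda(H_2\cap(-\infty,c))=0$; hence up to $\lambda$-null sets $H_1\subset(-\infty,c]$, $H_2\subset[c,\infty)$ and $\lambda(H_1\cup H_2)=m_1+m_2$ with $m_i=\lambda(H_i)$. Writing $g=f_{H_1\cup H_2}$, one checks $g(x)=f_{H_1}(x)$ for $x<c$ and $g(x)=m_1+f_{H_2}(x)$ for $x\ge c$. With $a_1=\inf{\cal{MS}}^{hf}(H_1)\le c$ and $b_2=\sup{\cal{MS}}^{hf}(H_2)\ge c$, a direct comparison against the target level $(m_1+m_2)/2$ shows that any $x$ with $g(x)=(m_1+m_2)/2$ obeys $a_1\le x\le b_2$ (for $x<a_1$ one gets $g(x)<m_1/2$, for $x>b_2$ one gets $g(x)>m_1+m_2/2$), which is exactly the required chain $\inf{\cal{MS}}^{hf}(H_1)\le\inf{\cal{MS}}^{hf}(H_1\cup H_2)\le\sup{\cal{MS}}^{hf}(H_1\cup H_2)\le\sup{\cal{MS}}^{hf}(H_2)$.

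For convexity let $I=[\alpha,\beta]$, ${\cal{MS}}^{hf}(H)=[a,b]\subset I$, $L\subset I$, $G=H\cup L$ and $\ell=\lambda(L-H)$, so $\lambda(G)=\lambda(H)+\ell$. By the reflection symmetry it suffices to prove $\sup{\cal{MS}}^{hf}(G)\le\beta$. Since $L\subset(-\infty,\beta]$ and $\beta\ge b$, additivity gives $f_G(\beta)=f_H(\beta)+\ell\ge\lambda(H)/2+\ell=\lambda(G)/2+\ell/2$; if $\ell>0$ this is strict, so no $x\ge\beta$ solves $f_G(x)=\lambda(G)/2$, while if $\ell=0$ then $f_G=f_H$ and ${\cal{MS}}^{hf}(G)={\cal{MS}}^{hf}(H)\subset I$. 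In both cases $\sup{\cal{MS}}^{hf}(G)\le\beta$, and the lower endpoint is symmetric.

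Finally I would settle the negative statements with explicit sets. Taking $H=(0,1)\cup(\mathbb{Q}\cap(2,3))$ gives $\lambda(H)=1$ and ${\cal{MS}}^{hf}(H)=\{1/2\}$, whereas $cl(H)=H'=[0,1]\cup[2,3]$ has ${\cal{MS}}^{hf}=[1,2]$, settling \emph{not closed} and \emph{not accumulated} at once; $H=[0,1]\cup[2,3]$ has ${\cal{MS}}^{hf}(H)=[1,2]$ but ${\cal{MS}}^{hf}([1,2])=\{3/2\}$, so \emph{not idempotent}; and $[0,1]\subset[0,3]$ with ${\cal{MS}}^{hf}([0,1])=\{1/2\}\not\subset\{3/2\}={\cal{MS}}^{hf}([0,3])$ gives \emph{not increasing}. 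The main obstacle is strong-monotonicity: the delicate part is the clean bookkeeping of the $\lambda$-null isolated points near $c$ and the resulting split formula for $g$, with convexity a close second because of the degenerate case $\ell=0$.
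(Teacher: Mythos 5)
Your proof is correct and follows essentially the same route as the paper: everything is reduced to the monotone distribution function $f_H(x)=\lambda(H^{x-})$ already introduced for \ref{pmshf1}, the convexity argument is the same mass-bookkeeping contradiction, and the counterexamples for the negative claims are the paper's (or trivially equivalent ones). Your write-up is in fact more explicit than the paper on strong-monotonicity and the invariance properties, which the paper dismisses in one line, but the underlying ideas coincide.
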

\begin{proof}The "strong" parts and finite-independence are a consequence of the fact that a countable set has measure zero.

To show convexity let ${\cal{MS}}^{hf}(H)=[a,b]\subset I$ where $I$ is an interval and let $L\subset I$. Suppose that $x\in {\cal{MS}}^{hf}(H\cup L)$ and $\sup I<x$ holds as well. Obviously $\lambda(H^{b-})=\lambda(H^{b+})=\lambda(H\cap[b,x])+\lambda(H^{x+})$, and $\lambda(H^{b-})+\lambda(H\cap[b,x])+\lambda(L)=\lambda((H\cup L)^{x-})=\lambda((H\cup L)^{x+})=\lambda(H^{x+})$. From these two equations we get that $\lambda(L)=0$ which means that ${\cal{MS}}^{hf}(H\cup L)={\cal{MS}}^{hf}(H)$ which gives that $x\in [a,b]\subset I$ that is a contradiction.

The remaining assertions are trivial.

To show that it is neither closed nor accumulated let $H=([0,1]\cap\mathbb{Q})\cup[2,3]$. It is not idempotent: $H=[0,1]\cup[2,3]$. To show that it is not increasing let $H_1=[0,1]\cup[4,5],\ H_2=[0,2]\cup[3,5]$. Then $H_1\subset H_2$ but ${\cal{MS}}^{hf}(H_1)\not\subset{\cal{MS}}^{hf}(H_2)$.
\end{proof}

\begin{ex}We construct a mean-set ${\cal{MS}}$ that is of finite order for each set, but is not of finite order.

Let $Dom({\cal{MS}})$ be the set of all bounded open subsets of $\mathbb{R}$. If $H\in Dom({\cal{MS}})$ then $H=\cup_1^{\infty}I_i$ where $I_i$ is an open interval and $I_i\cap I_j=\emptyset\ (i\ne j)$. Because $H$ is bounded $m=\max\{length(I_i):i\in\mathbb{N}\}$ exists and $\{I_i:length(I_i)=m\}$ is finite. Let $\{I_i:length(I_i)=m\}=\{I_{n_1},\dots,I_{n_k}\}$ such that $\sup I_{n_j}<\inf I_{n_{j+1}}\ (1\leq j\leq k-1)$. If $k=1$ then set ${\cal{MS}}(H)=I_{n_1}$. If $k>1$ then set ${\cal{MS}}(H)=\bigcup\limits_{j=2}^k I_{n_j}$. It is easy to see that ${\cal{MS}}$ has the required property.\qed
\end{ex}

\subsection{Mean-sets defined by ordinary means}

We can also define mean-sets starting from ordinary means. In this subsection we present some ways where we meet such mean-sets in a natural way.

\smallskip

First let us set a convention for an ordinary mean ${\cal{K}}$: if $a>b$ then set ${\cal{K}}(a,b)={\cal{K}}(b,a)$ i.e. we could say that ${\cal{K}}$ is defined on unordered pairs of $\mathbb{R}$ or on the set $\{a,b\}$.

\smallskip

We can simply generalize the method described in Definition \ref{dmsaa}.

\begin{df}Let ${\cal{K}}$ be an ordinary mean defined for any pairs of points and $H\subset\mathbb{R}$. Set ${\cal{MS}}^{aa}_{\cal{K}}(H)=\{{\cal{K}}(a,b):a,b\in H\}$. 
\end{df}

Similarly we can prove:

\begin{prp}${\cal{MS}}^{aa}_{\cal{K}}$ is internal, monotone and increasing. It is not strong internal, not finite, not strong-monotone, not convex.\qed
\end{prp}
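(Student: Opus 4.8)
The plan is to read off every assertion from the defining inequality $\min\{a,b\}\le{\cal{K}}(a,b)\le\max\{a,b\}$ satisfied by the ordinary mean ${\cal{K}}$ on a pair, so that the proof runs parallel to the one given for ${\cal{MS}}^{aa}$ and ${\cal{MS}}^{aas}$.

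Internality is immediate: for $a,b\in H$ we have $\inf H\le\min\{a,b\}\le{\cal{K}}(a,b)\le\max\{a,b\}\le\sup H$, hence ${\cal{MS}}^{aa}_{\cal{K}}(H)\subset[\inf H,\sup H]$. Monotonicity and increasingness are inherited from ${\cal{K}}$ exactly as for ${\cal{MS}}^{aa}$: increasingness because $H\subset K$ turns every pair from $H$ into a pair from $K$, and monotonicity because, when $\sup H_1\le\inf H_2$, the extreme ${\cal{K}}$-values over $H_1\cup H_2$ are produced by the two smallest (respectively two largest) points, which lie in $H_1$ (respectively $H_2$), so that $\inf{\cal{MS}}^{aa}_{\cal{K}}(H_1\cup H_2)=\inf{\cal{MS}}^{aa}_{\cal{K}}(H_1)$ and $\sup{\cal{MS}}^{aa}_{\cal{K}}(H_1\cup H_2)=\sup{\cal{MS}}^{aa}_{\cal{K}}(H_2)$, whence the required four-term chain.

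For the negative assertions I would reuse the set $H=\{\frac1n:n\in\mathbb{N}\}\cup\{-\frac1n:n\in\mathbb{N}\}$ that served the same purpose for ${\cal{MS}}^{aa}$. Here $H'=\{0\}$, so $[\varliminf H,\varlimsup H]=\{0\}$, yet ${\cal{MS}}^{aa}_{\cal{K}}(H)$ contains values bounded away from $0$, for instance ${\cal{K}}(1,\tfrac12)\in[\tfrac12,1]$; this shows at once that ${\cal{MS}}^{aa}_{\cal{K}}$ is not strong internal and, being infinite, not finite. Since strong-monotonicity is defined to presuppose strong internality, the failure of the latter already gives that ${\cal{MS}}^{aa}_{\cal{K}}$ is not strong-monotone; the splitting $H_1=\{\frac1n:n\in\mathbb{N}\}$, $H_2=\{-\frac1n:n\in\mathbb{N}\}$ furnishes the explicit obstruction if one prefers it.

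The one genuinely delicate point---and the main obstacle---is non-convexity, because the analogous argument for ${\cal{MS}}^{aa}$ in fact produced convexity. The point is that here the containment ${\cal{MS}}^{aa}_{\cal{K}}(H)\subset I$ need not force $H\subset I$, so the failure can be engineered through a pair created only after adjoining $L$, exactly as for ${\cal{MS}}^{aas}$. Concretely I would take $H=\{-1,2\}$, $I=[0,1]$, $L=\{0\}$: one has ${\cal{K}}(-1,2)\in[0,1]$ (equal to $\tfrac12$ when ${\cal{K}}=\A$), so ${\cal{MS}}^{aa}_{\cal{K}}(H)\subset I$ and $L\subset I$, while the new pair $\{-1,0\}$ satisfies ${\cal{K}}(-1,0)<0$, giving ${\cal{MS}}^{aa}_{\cal{K}}(H\cup L)\not\subset I$. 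The care needed is only to confirm that this straddling cross-pair is the source of the escape from $I$; this is precisely the mechanism behind the non-convexity of ${\cal{MS}}^{aas}$, and checking it finishes the proof.
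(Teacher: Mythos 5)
The paper gives no written proof here (the proposition is stated with a bare \qed after ``Similarly we can prove''), so the earlier proof for ${\cal{MS}}^{aa}$/${\cal{MS}}^{aas}$ is indeed the intended template, and your handling of internality, increasingness, non-strong-internality, non-finiteness and non-strong-monotonicity is fine. One step in your monotonicity argument does not survive the passage from $\A$ to a general ordinary mean: the claim that the extreme ${\cal{K}}$-values over $H_1\cup H_2$ ``are produced by the two smallest (respectively two largest) points'' presupposes that ${\cal{K}}$ is monotone in its arguments, which an ordinary mean need not be. The clean route is to note that internality of ${\cal{K}}$ forces ${\cal{K}}(a,a)=a$, hence $H\subset{\cal{MS}}^{aa}_{\cal{K}}(H)$ and therefore $\inf{\cal{MS}}^{aa}_{\cal{K}}(H)=\inf H$ and $\sup{\cal{MS}}^{aa}_{\cal{K}}(H)=\sup H$ for every $H$; the four-term chain in the definition of monotonicity then follows from internality applied to $H_1\cup H_2$.

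The genuine gap is in non-convexity, exactly where you placed the difficulty. The same identity $H\subset{\cal{MS}}^{aa}_{\cal{K}}(H)$ destroys your counterexample: for $H=\{-1,2\}$ the set ${\cal{MS}}^{aa}_{\cal{K}}(H)$ contains ${\cal{K}}(-1,-1)=-1$ and ${\cal{K}}(2,2)=2$, so the hypothesis ${\cal{MS}}^{aa}_{\cal{K}}(H)\subset[0,1]$ of the convexity definition is never satisfied, for any ${\cal{K}}$. Worse, your stated premise that ``${\cal{MS}}^{aa}_{\cal{K}}(H)\subset I$ need not force $H\subset I$'' is false: it does force it, and then $H\cup L\subset I$ together with internality gives ${\cal{MS}}^{aa}_{\cal{K}}(H\cup L)\subset[\inf(H\cup L),\sup(H\cup L)]\subset I$, which is precisely the paper's own argument that ${\cal{MS}}^{aa}$ \emph{is} convex, applied verbatim to ${\cal{MS}}^{aa}_{\cal{K}}$. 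So under the natural reading of the definition (pairs with $a=b$ allowed, as in ${\cal{MS}}^{aa}$) the assertion ``not convex'' is not provable because it is false; note that the earlier proposition already contradicts itself on this very point, listing ${\cal{MS}}^{aa}$ as not convex in one sentence and convex in the next. The claim can only be rescued by reading ${\cal{MS}}^{aa}_{\cal{K}}$ with the ${\cal{MS}}^{aas}$ convention $a\ne b$, and even then your specific example requires ${\cal{K}}(-1,2)\in[0,1]$ and ${\cal{K}}(-1,0)<0$, which hold for ${\cal{K}}=\A$ but not for an arbitrary ordinary mean. You should either prove non-convexity under that modified definition (e.g.\ via singletons, where ${\cal{MS}}^{aas}_{\cal{K}}(\{a\})=\emptyset\subset I$ trivially) or flag the statement itself as erroneous as written.
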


In the sequel we will define mean-sets that are defined on finite closed intervals only.

\medskip

The next example is based on the method how we get quasi-arithmetic means from the arithmetic mean.

\begin{df}Let ${\cal{K}}$ be an ordinary mean that is defined on $\mathbb{R}\times\mathbb{R}$. Let $f:\mathbb{R}\to\mathbb{R}$ be a continuous function. Then for $a,b\in\mathbb{R}$ set \[{\cal{K}}^f(a,b)=\big\{c\in[a,b]:f(c)={\cal{K}}\big(f(a),f(b)\big)\big\}.\]
\end{df}

\begin{prp}Let ${\cal{K}}$ be an ordinary mean, $a<b\in\mathbb{R}$. Then

(1) ${\cal{K}}^f(a,b)=[a,b]\cap f^{-1}\big({\cal{K}}(f(a),f(b))\big)$

(2) ${\cal{K}}^f(a,b)\ne\emptyset$

(3) If $f^{-1}(\{a\})$ is discrete for all $a\in\mathbb{R}$ then ${\cal{K}}^f(a,b)$ is finite. 
\end{prp}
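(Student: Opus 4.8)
The plan is to treat the three parts in sequence, each resting on the previous. Part (1) is purely a matter of unwinding the definition: the defining condition ``$c\in[a,b]$ and $f(c)={\cal{K}}(f(a),f(b))$'' says exactly that $c$ lies in $[a,b]$ and simultaneously in the preimage $f^{-1}\big(\{{\cal{K}}(f(a),f(b))\}\big)$, so ${\cal{K}}^f(a,b)=[a,b]\cap f^{-1}\big({\cal{K}}(f(a),f(b))\big)$ follows immediately, with no argument needed beyond reading $f^{-1}$ applied to a value as the preimage of that value.

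For part (2), I would set $m={\cal{K}}(f(a),f(b))$ and first invoke the mean inequality. Since ${\cal{K}}$ is an ordinary mean evaluated on the pair $\{f(a),f(b)\}$, we have $\min\{f(a),f(b)\}\leq m\leq\max\{f(a),f(b)\}$, so $m$ lies between the two endpoint values. As $f$ is continuous on $[a,b]$, the Intermediate Value Theorem guarantees that $f$ attains every value lying between $f(a)$ and $f(b)$; in particular there is a $c\in[a,b]$ with $f(c)=m$. This $c$ belongs to ${\cal{K}}^f(a,b)$, so the set is nonempty.

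For part (3), I would use the explicit description from (1): the set $S:={\cal{K}}^f(a,b)=[a,b]\cap f^{-1}(\{m\})$ is bounded, being contained in $[a,b]$, and closed, since $[a,b]$ is closed and $f^{-1}(\{m\})$ is closed by continuity of $f$; hence $S$ is compact. By hypothesis $f^{-1}(\{m\})$ is discrete, and $S$ inherits discreteness as a subspace. If $S$ were infinite then, by Bolzano--Weierstrass, it would possess an accumulation point $p\in[a,b]$; closedness of $S$ forces $p\in S\subset f^{-1}(\{m\})$, whence $p$ fails to be isolated in $f^{-1}(\{m\})$, contradicting discreteness. Therefore $S$ must be finite.

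I expect the main obstacle to be precisely this last step. Discreteness by itself does not yield finiteness inside a compact interval --- the set $\{\frac1n:n\in\mathbb{N}\}$ is discrete yet infinite in $[0,1]$ --- so the argument genuinely needs the closedness of $f^{-1}(\{m\})$ supplied by the continuity of $f$. That closedness is exactly what prevents an accumulation point of $S$ from sitting inside $S$, and this interplay between discreteness and closedness is the point that must be argued with care rather than merely asserted. Parts (1) and (2) are routine by comparison.
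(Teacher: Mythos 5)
Your proof is correct and follows essentially the same route as the paper, which simply declares (1) and (3) obvious and attributes (2) to the Darboux (intermediate value) property of the continuous $f$. Your elaboration of (3) --- that discreteness alone is not enough and one must combine it with the closedness of $f^{-1}(\{m\})$ and compactness of $[a,b]$ --- is a correct filling-in of a detail the paper leaves unstated.
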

\begin{proof}(1) and (3) are obvious.
(2) is the consequence of the Darboux property of the continuous $f$.
\end{proof}

E.g. ${\cal{K}}^{\sin x}$ is a finite mean-set for every ordinary mean ${\cal{K}}$.

\medskip

Let us recall some basic definition regarding ordinary means. An ordinary mean ${\cal{K}}$ is strict internal if $a<b$ implies that $a<{\cal{K}}(a,b)<b$. ${\cal{K}}$ is called continuous if it is a 2-variable continuous function.

\begin{df}Let ${\cal{K}}$ be a strictly internal ordinary mean that is defined on $\mathbb{R}\times\mathbb{R}$.  Let $H\subset\mathbb{R}$ be finite, $H=\{h_1,\dots,h_n\},\ h_1<\dots<h_n$. Then let \[\overline{\cal{K}}(H)=\{{\cal{K}}(h_i,h_{i+1}):1\leq i\leq n-1\}.\]
Let $a<b$ be given. Then set $\overline{\cal{K}}^1(a,b)=\{a,b\}\cup{\cal{K}}(a,b),\ \overline{\cal{K}}^{n+1}(a,b)=\overline{\cal{K}}^n(a,b)\cup\overline{\cal{K}}(\overline{\cal{K}}^n(a,b)),\ \overline{\cal{K}}^{\infty}(a,b)=\bigcup\limits_{n=1}^{\infty}\overline{\cal{K}}^n(a,b)$. 
\end{df}

\begin{prp}$\overline{\cal{K}}^n(a,b),\overline{\cal{K}}^{\infty}(a,b)$ are mean-sets $(n\in\mathbb{N})$.\qed
\end{prp}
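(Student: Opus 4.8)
The plan is to verify directly the two defining conditions of a mean-set for the functions $\overline{\cal{K}}^n$ and $\overline{\cal{K}}^{\infty}$, whose common domain $C$ is the collection of two-point sets $\{a,b\}$ with $a<b$. Every such set is bounded and nonempty, so $\emptyset\notin C$ holds automatically, and since $\inf\{a,b\}=a$ and $\sup\{a,b\}=b$, the only thing that genuinely has to be checked is internality, namely $\overline{\cal{K}}^n(a,b)\subset[a,b]$ and $\overline{\cal{K}}^{\infty}(a,b)\subset[a,b]$.

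First I would prove by induction on $n$ the strengthened statement that $\overline{\cal{K}}^n(a,b)$ is a \emph{finite} subset of $[a,b]$ whose least and greatest elements are exactly $a$ and $b$. The base case is $\overline{\cal{K}}^1(a,b)=\{a,b\}\cup{\cal{K}}(a,b)=\{a,{\cal{K}}(a,b),b\}$, which lies in $[a,b]$ and has extremes $a,b$ because strict internality of ${\cal{K}}$ gives $a<{\cal{K}}(a,b)<b$. For the inductive step, write $\overline{\cal{K}}^n(a,b)=\{h_1,\dots,h_m\}$ with $h_1=a<\dots<h_m=b$; this ordering is available precisely because the set is finite, which is why finiteness is carried in the hypothesis. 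Each new point ${\cal{K}}(h_i,h_{i+1})$ lies in $(h_i,h_{i+1})\subset[a,b]$ by internality of ${\cal{K}}$, so $\overline{\cal{K}}(\overline{\cal{K}}^n(a,b))\subset[a,b]$ and therefore $\overline{\cal{K}}^{n+1}(a,b)=\overline{\cal{K}}^n(a,b)\cup\overline{\cal{K}}(\overline{\cal{K}}^n(a,b))\subset[a,b]$. The union adds at most $m-1$ points, so finiteness persists, and the extremes remain $a$ and $b$ since no point outside $[a,b]$ is ever created and $a,b$ are never removed.

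Finally, since $\overline{\cal{K}}^{\infty}(a,b)=\bigcup_{n=1}^{\infty}\overline{\cal{K}}^n(a,b)$ is a union of sets each contained in $[a,b]$, it too is contained in $[a,b]$. This establishes internality for all of $\overline{\cal{K}}^n$ and for $\overline{\cal{K}}^{\infty}$, completing the verification that each is a mean-set.

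The argument is essentially routine; the one point requiring care is the bookkeeping inside the induction. Because $\overline{\cal{K}}$ is only defined on finite sets (its definition refers to the ordered listing $h_1<\dots<h_m$), the inductive hypothesis must explicitly guarantee finiteness at each stage so that the recursion $\overline{\cal{K}}^{n+1}=\overline{\cal{K}}^n\cup\overline{\cal{K}}(\overline{\cal{K}}^n)$ is meaningful. This is the only place where the verification is not completely immediate.
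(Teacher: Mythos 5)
Your verification is correct, and it is exactly the routine induction the paper has in mind --- the paper marks this proposition with \qed{} and omits the argument as immediate, so your write-up simply supplies the bookkeeping (finiteness of each $\overline{\cal{K}}^n(a,b)$, preservation of the extremes $a,b$, and internality of ${\cal{K}}$ forcing each new point into $[a,b]$). The only cosmetic discrepancy is the domain: the paper says these mean-sets are ``defined on finite closed intervals only,'' so the input is the interval $[a,b]$ rather than the pair $\{a,b\}$, but since $\inf$ and $\sup$ of either set are $a$ and $b$, nothing in your argument changes.
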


\begin{prp}If ${\cal{K}}$ is strictly internal and continuous then $\overline{\cal{K}}^{\infty}(a,b)$ is dense in $(a,b)$.
\end{prp}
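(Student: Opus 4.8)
The plan is to view each set $\overline{\cal{K}}^n(a,b)$ as the vertex set of a partition of $[a,b]$ and to show that the mesh (the length of the longest gap between consecutive points) tends to $0$ as $n\to\infty$; density of $\overline{\cal{K}}^{\infty}(a,b)$ in $[a,b]$, and hence in $(a,b)$, is then immediate. Because ${\cal{K}}$ is strictly internal, one step of $\overline{\cal{K}}$ inserts the point ${\cal{K}}(h_i,h_{i+1})$ strictly between each consecutive pair $h_i<h_{i+1}$, so $\overline{\cal{K}}^{n+1}(a,b)$ refines $\overline{\cal{K}}^{n}(a,b)$ and each gap is split into two strictly shorter gaps. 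Thus the meshes are non-increasing in $n$, and it suffices to prove that for every $\eta>0$ there is an $N$ with mesh less than $\eta$.

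To control how fast a gap shrinks I would introduce the splitting ratio $r(c,d)=\frac{{\cal{K}}(c,d)-c}{d-c}$ for $c<d$. Strict internality gives $r(c,d)\in(0,1)$, so splitting $[c,d]$ at ${\cal{K}}(c,d)$ produces children of lengths $r(c,d)(d-c)$ and $(1-r(c,d))(d-c)$. The difficulty is that a single split need not roughly halve a gap: a priori $r(c,d)$ could be arbitrarily close to $0$ or $1$, so pointwise strict internality alone does not force the mesh down. This is the main obstacle, and I would overcome it by a compactness argument. Fix $\eta>0$ with $\eta\le b-a$ and consider $S_\eta=\{(c,d):a\le c<d\le b,\ d-c\ge\eta\}$, which is compact and disjoint from the diagonal. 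Since ${\cal{K}}$ is continuous, $r$ is continuous on $S_\eta$ and takes values in $(0,1)$, so it attains a minimum and a maximum there; setting $\rho=\rho(\eta)=\max\{\max_{S_\eta}r,\ 1-\min_{S_\eta}r\}$ we obtain $\rho<1$. Consequently, whenever a subinterval $[c,d]\subseteq[a,b]$ has length at least $\eta$, each of its two children has length at most $\rho\,(d-c)$.

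It then remains to convert this uniform contraction into mesh decay. Any gap $G$ appearing in $\overline{\cal{K}}^{N}(a,b)$ sits at the bottom of a chain $[a,b]=I_0\supset I_1\supset\cdots\supset I_N=G$ in which each $I_{k+1}$ is a child of $I_k$, and the lengths are non-increasing. If $|G|\ge\eta$, then every $I_k$ has length at least $\eta$, hence $I_k\in S_\eta$ and $|I_{k+1}|\le\rho\,|I_k|$ for all $k$, giving $|G|\le\rho^{N}(b-a)$. Choosing $N$ so large that $\rho^{N}(b-a)<\eta$ yields a contradiction, so in fact every gap in $\overline{\cal{K}}^{N}(a,b)$ has length less than $\eta$. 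Thus the mesh can be made smaller than any prescribed $\eta>0$, so it tends to $0$, and $\overline{\cal{K}}^{\infty}(a,b)$ meets every open subinterval of $[a,b]$; in particular it is dense in $(a,b)$.
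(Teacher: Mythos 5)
Your proof is correct, but it takes a genuinely different route from the paper's. The paper argues by contradiction: it takes a maximal subinterval $[c,d]$ of $[a,b]$ whose interior misses $\overline{\cal{K}}^{\infty}(a,b)$, approximates $c$ and $d$ by points $c_n,d_n$ of $\overline{\cal{K}}^{\infty}(a,b)$, and uses continuity of ${\cal{K}}$ at the single point $(c,d)$ to force ${\cal{K}}(c_n,d_n)$ into $(c,d)$. You instead prove the stronger quantitative statement that the mesh of $\overline{\cal{K}}^{n}(a,b)$ tends to $0$, by extracting from compactness of $S_\eta$ and continuity of ${\cal{K}}$ a uniform contraction factor $\rho(\eta)<1$ for all gaps of length at least $\eta$, and then running the chain-of-children argument. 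Your version is longer but buys two things: an explicit rate-type conclusion (every gap falls below $\eta$ after finitely many steps), and the fact that you only ever apply ${\cal{K}}$ to \emph{consecutive} points of some $\overline{\cal{K}}^{n}(a,b)$, so membership of the inserted point in $\overline{\cal{K}}^{n+1}(a,b)$ is immediate from the definition; the paper's argument implicitly needs ${\cal{K}}(c_n,d_n)\in\overline{\cal{K}}^{\infty}(a,b)$ for points $c_n,d_n$ that are not obviously adjacent at any stage, which your approach sidesteps entirely. Your observation that pointwise strict internality alone does not force the mesh down, and that continuity is exactly what upgrades it to a uniform bound, is the right identification of where the hypothesis of continuity enters.
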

\begin{proof}Suppose the contrary that there is $[c,d]\subset[a,b]$ such that $(c,d)\cap\overline{\cal{K}}^{\infty}(a,b)=\emptyset$. Take a maximal such interval. Clearly $c,d\not\in\overline{\cal{K}}^{\infty}(a,b)$ but there are sequences $(c_n),(d_n)$ such that $c_n,d_n\in \overline{\cal{K}}^{\infty}(a,b)\ (n\in\mathbb{N})$ and $c_n\to c,\ d_n\to d$. By strict internality $c<{\cal{K}}(c,d)<d$. By continuity there is $N\in\mathbb{N}$ such that $n>N$ implies that $c<{\cal{K}}(c_n,d_n)<d$ holds which is a contradiction.
\end{proof}

In the construction of compounding means we also meet a natural mean-set.

\begin{df}Let ${\cal{K}}_1,{\cal{K}}_2$ be strictly internal, continuous ordinary means such that ${\cal{K}}_1\leq{\cal{K}}_2$. Let $a<b\in\mathbb{R}$. Set $a_0=a,b_0=b$ and $a_{n+1}={\cal{K}}_1(a_n,b_n),\ b_{n+1}={\cal{K}}_2(a_n,b_n)$. Set ${\cal{MS}}_{{\cal{K}}_1,{\cal{K}}_2}(a,b)=\{a_n,b_n:n\in\mathbb{N}\}$.
\end{df}

It is known that $(a_n),(b_n)$ both converges to the same limit that is called the compounding mean of ${\cal{K}}_1,{\cal{K}}_2$. Hence the mean-set is a union of two convergent sequence.
\subsection{Mean-sets by sequences of approximating sets}

In this subsection we are dealing with mean-sets defined only on countably infinite sets.

\begin{df}\label{d2}
Let $H\subset\mathbb{R}$ bounded $,\ |H|=\aleph_0$. 
Let us say that $(H_n)$ is an \textbf{approximating sequence} for $H$ if $H_n\subset\mathbb{R},\ |H_n|<\infty,H_n\subseteq H_{n+1} (\forall n),\cup_{n=1}^{\infty}H_n=H$. Moreover $(H_n)$ is called \textbf{x-balanced} $(x\in\mathbb{R})$ if $|H^{x-}_n|=|H^{x+}_n|\ (\forall n).$ Let 
$(H_n)$ be called \textbf{balanced} if $(H_n)$ is $x$-balanced for some $x\in\mathbb{R}$.

We define 3 mean-sets, the average, s-average and xs-average sets of $H$.
$$\boldsymbol{{\cal{MS}}^{a}}(H)=\Big\{x\in\mathbb{R}: \exists (H_n)\text{ approximating }  H\text{ and } \lim_{n\to\infty}\A(H_n)=x\Big\}$$
$$\boldsymbol{{\cal{MS}}^{as}}(H)=\Big\{x\in\mathbb{R}: \exists (H_n)\ \text{balanced, approximating }  H\text{ and } \lim_{n\to\infty}\A(H_n)=x\Big\}$$
$$\boldsymbol{{\cal{MS}}^{axs}}(H)=\Big\{x\in\mathbb{R}: \exists (H_n)\ x\text{-balanced, approximating }  H\text{ and } \lim_{n\to\infty}\A(H_n)=x\Big\}.$$
\end{df}

Obviously ${\cal{MS}}^{axs}(H)\subset {\cal{MS}}^{as}(H)\subset {\cal{MS}}^a(H)$.

\bigskip
Unfortunately ${\cal{MS}}^a$ is out of interest as the next proposition shows. It works as the Cesaro summation in this sense (cf. \cite{larrttfc}). (We emphasize that $H$ is bounded.)

\begin{prp}\label{phm}${\cal{MS}}^a(H)=[\varliminf H,\varlimsup H]$.
\end{prp}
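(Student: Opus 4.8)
The plan is to prove the two inclusions separately. Write $a=\varliminf H=\inf H'$ and $b=\varlimsup H=\sup H'$. Since $H$ is bounded, $H'$ is closed and bounded, hence compact, so in fact $a=\min H'$ and $b=\max H'$ belong to $H'$; in particular $H$ contains points arbitrarily close to $a$ and arbitrarily close to $b$, which will serve as two inexhaustible reservoirs in the construction below. I also record the elementary identity, for a finite $F$ and a point $h$, that $\A(F\cup\{h\})-\A(F)=\dfrac{h-\A(F)}{|F|+1}$, whence $\bigl|\A(F\cup\{h\})-\A(F)\bigr|\le \dfrac{\sup H-\inf H}{|F|+1}$: a single insertion perturbs a running arithmetic mean by at most $d/(k+1)$, where $d=\sup H-\inf H$ and $k=|F|$, and this tends to $0$ as the sets grow.

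For the inclusion ${\cal{MS}}^a(H)\subseteq[a,b]$, fix an approximating sequence $(H_n)$ with $\A(H_n)\to x$ and let $\epsilon>0$. By the definition of $a$ and $b$ the set $F=H\setminus[a-\epsilon,b+\epsilon]$ is finite: if it were infinite it would, being bounded, have an accumulation point lying outside $(a-\epsilon,b+\epsilon)$, contradicting $a=\inf H'$ and $b=\sup H'$. Put $m=|F|$. For each $n$ split $H_n$ into its part inside $[a-\epsilon,b+\epsilon]$ and its part contained in $F$. The inside part has average in $[a-\epsilon,b+\epsilon]$, the outside part consists of at most $m$ points each of absolute value at most $\max\{|\inf H|,|\sup H|\}$, and $|H_n|\to\infty$. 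Hence the outside contribution to $\A(H_n)$ tends to $0$ while the inside weight tends to $1$, giving $a-\epsilon\le\liminf\A(H_n)\le\limsup\A(H_n)\le b+\epsilon$. Letting $\epsilon\to0$ yields $x\in[a,b]$.

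For the reverse inclusion fix $x\in[a,b]$; I construct an enumeration $g_1,g_2,\dots$ of $H$ whose running averages $A_k=\A(\{g_1,\dots,g_k\})$ converge to $x$, and then take $H_n=\{g_1,\dots,g_n\}$, which is an approximating sequence. The idea is a greedy steering scheme: keeping the running average $A_k$, whenever $A_k>x$ I append a not-yet-used point of $H$ lying near $a$, and whenever $A_k\le x$ I append one near $b$, so that each such step moves the average toward $x$; interspersed with these I append, one at a time and following a fixed pre-chosen enumeration of $H$, the next point not yet used, which guarantees $\bigcup_n H_n=H$. Convergence is organized in phases indexed by $m$: in phase $m$ I steer until $|A_k-x|<1/m$ and only then perform a single ``covering'' insertion. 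Since a lone covering insertion perturbs the average by at most $d/(k+1)=o(1)$, while divergence of $\sum 1/k$ shows that repeated steering can decrease $|A_k-x|$ by any prescribed amount, each phase terminates in finitely many steps, the deviation at the end of phase $m$ is at most $1/m+o(1)\to0$, and the same bound controls the mid-phase values, so $A_k\to x$.

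The main obstacle is precisely this last construction: one must ensure that the obligatory covering insertions, which may place points anywhere in $[a,b]$ and whose cumulative effect over a block of indices need not be small, do not prevent the steering moves from driving the average to $x$, all while exhausting $H$. The phase discipline above resolves this by making covering insertions rare relative to steering and by exploiting that a single insertion's effect vanishes as $k\to\infty$. Two degenerate situations should be noted. If $a=b$ the interval is the single point $\{a\}$, and it suffices to enumerate $H$ in order of increasing distance to $a$, whence $g_k\to a$ and the Cesaro-type average $A_k\to a$. When $x$ equals an endpoint, only the reservoir of points tending to that endpoint is needed for the steering moves, the argument being otherwise unchanged.
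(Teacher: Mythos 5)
Your proposal is correct, but it takes a genuinely different (and more self-contained) route than the paper. The paper's entire proof is a citation: for $\alpha\in[\varliminf H,\varlimsup H]$ it invokes Theorem 2.6 of the author's preprint on Riemann-rearrangement-type theorems for Cesaro summation to produce an enumeration $(d_n)$ of $H$ with $\A(d_1,\dots,d_n)\to\alpha$, and then sets $H_n=\{d_1,\dots,d_n\}$; the reverse containment ${\cal{MS}}^a(H)\subseteq[\varliminf H,\varlimsup H]$ is not addressed at all. You prove both inclusions from first principles. Your argument for $\subseteq$ (finitely many points of $H$ lie outside $[a-\epsilon,b+\epsilon]$, so their weight in $\A(H_n)$ vanishes as $|H_n|\to\infty$) fills a step the paper leaves implicit, and your greedy steering construction with phases --- alternating insertions from the reservoirs near $a$ and $b$ with rare ``covering'' insertions from a fixed enumeration, using the single-insertion bound $|\A(F\cup\{h\})-\A(F)|\le d/(|F|+1)$ and the divergence of $\sum 1/k$ --- is in effect a proof of the cited rearrangement theorem specialized to this setting. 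Note also that the approximating sequences in Definition \ref{d2} are exactly the sequences of initial segments of enumerations of $H$ (any approximating sequence refines to one by inserting points one at a time, which does not change the set of attainable limits here), so your construction and the paper's are producing the same objects. What the paper's version buys is brevity and reuse of existing machinery; what yours buys is a complete argument readable without the external reference, plus an explicit treatment of the degenerate case $a=b$ and of endpoint targets. The only places where your sketch leans on the reader are routine: verifying that each phase terminates even when $x$ is an endpoint (where the per-step decrease is proportional to $(A_k-x)/(k+1)$ rather than to a fixed constant over $k+1$, but the product $\prod(1-c/(k+1))$ still tends to $0$), and that overshoots past the target window can only occur for small $k$. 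Neither is a gap.
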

\begin{proof}If $\alpha\in[\varliminf H,\varlimsup H]$ then by \cite{larrttfc} Theorem 2.6 we can put $H$ into a sequence $(d_n)$ such that $\A(d_1,\dots,d_n)\to \alpha.$  Obviously $H_n=\{d_i:1\leq i\leq n\}$ gives that $\alpha\in {\cal{MS}}^a(H)$.
\end{proof}

\begin{thm}${\cal{MS}}^{as}(H)=\emptyset$ if $H'=\{\alpha\}$ and $|H^{\alpha-}|<\infty$ or $|H^{\alpha+}|<\infty$.

${\cal{MS}}^{as}(H)=\{\alpha\}$ if $H'=\{\alpha\}$ and $|H^{\alpha-}|=|H^{\alpha+}|=\aleph_0$.

In the remaining cases (when $|H'|>1$) ${\cal{MS}}^{as}(H)=[\alpha,\beta]$ where $$\alpha=\frac{\min H'+\inf (H'-\{\min H'\})}{2},\beta=\frac{\max H'+\sup (H'-\{\max H'\})}{2}.$$
\end{thm}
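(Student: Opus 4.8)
The plan is to treat the three cases separately, using throughout a single reduction. If $(H_n)$ is $x$-balanced with $|H_n^{x-}|=|H_n^{x+}|=k_n$, then, writing $\bar L_n,\bar R_n$ for the averages of the points of $H_n$ lying in $H_n^{x-}$ and $H_n^{x+}$, one has $\A(H_n)=\tfrac12(\bar L_n+\bar R_n)+O(1/k_n)$ (the error accounts for a point sitting exactly at $x$, which is counted on both sides). Since $\cup H_n=H$ is infinite, $|H_n|\to\infty$, so $k_n\to\infty$ and the $O(1/k_n)$ term is negligible in the limit. I would then analyze $\bar L_n,\bar R_n$ via the accumulation structure of $H^{x-}$ and $H^{x+}$, invoking Proposition \ref{phm} to control how the average of a large finite subset of a bounded set can be steered within $[\varliminf,\varlimsup]$.

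\emph{Case $H'=\{\alpha\}$ with one side finite.} Here I would show that \textbf{no} infinite balanced approximating sequence exists, whence ${\cal{MS}}^{as}(H)=\emptyset$. The key observation is that for every $x$ at least one of $H^{x-},H^{x+}$ is finite: if $x\le\alpha$ then $H^{x-}\subseteq H^{\alpha-}$ is finite; if $x>\alpha$ then, $\alpha$ being the only accumulation point, only finitely many points of $H$ exceed $x$, so $H^{x+}$ is finite. Balance then forces $|H_n|\le 2\min(|H^{x-}|,|H^{x+}|)+1$ to be bounded, contradicting $|H_n|\to\infty$. The case $|H^{\alpha+}|<\infty$ is symmetric.

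\emph{Case $H'=\{\alpha\}$ with both sides infinite.} First I would prove that \emph{any} approximating sequence satisfies $\A(H_n)\to\alpha$: fixing $\epsilon>0$, only finitely many points lie outside $(\alpha-\epsilon,\alpha+\epsilon)$, so their contribution to $\A(H_n)$ is $O(1/|H_n|)\to0$ while the rest average within $\epsilon$ of $\alpha$; letting $\epsilon\to0$ gives the claim and hence ${\cal{MS}}^{as}(H)\subseteq\{\alpha\}$. For the reverse inclusion I would exhibit an $\alpha$-balanced sequence by enumerating $H^{\alpha-}$ and $H^{\alpha+}$ (both infinite) and adjoining one point from each side at every stage; its mean converges to $\alpha$ by the previous sentence, so ${\cal{MS}}^{as}(H)=\{\alpha\}$.

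\emph{Case $|H'|>1$.} Set $m_1=\min H'$, $M_1=\max H'$, $\mu=\inf(H'\setminus\{m_1\})$, $\nu=\sup(H'\setminus\{M_1\})$, so $\alpha=\tfrac12(m_1+\mu)$ and $\beta=\tfrac12(M_1+\nu)$. For the inclusion ${\cal{MS}}^{as}(H)\subseteq[\alpha,\beta]$ I would bound $\bar L_n\ge m_1-o(1)$ (only finitely many points lie below $m_1$) and $\bar R_n\ge\mu-o(1)$, giving $\A(H_n)\ge\alpha-o(1)$, with the symmetric bound yielding $\A(H_n)\le\beta+o(1)$. For the reverse inclusion I would attain $\alpha$ by choosing a balance point $x$ in the gap $(m_1,\mu)$ (letting $x\downarrow m_1$ when $m_1$ is not isolated in $H'$, so that $\mu=m_1$), then steering the left half to concentrate at $m_1$ and the right half at $\mu$ via Proposition \ref{phm} applied to $H^{x-}$ (least accumulation point $m_1$) and $H^{x+}$ (least accumulation point $\mu$), keeping the two counts equal by interleaving; symmetrically one attains $\beta$. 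An intermediate $v\in[\alpha,\beta]$ is reached by steering the halves to averages $\ell,r$ with $\tfrac12(\ell+r)=v$, the full interval of $v$'s arising by continuity as the concentration of the two halves is varied.

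I expect the decisive difficulty to be the lower bound $\bar R_n\ge\mu-o(1)$ (and its mirror $\bar L_n\le\nu+o(1)$): one must show that the right half cannot collapse toward $m_1$, which requires using the balance constraint together with the fine accumulation structure of $H$ near $m_1$ — in particular one has to argue carefully about whether $H$ accumulates at $m_1$ from above, and it is precisely here that the interaction between the freedom granted by Proposition \ref{phm} and the rigidity imposed by balancing must be pinned down.
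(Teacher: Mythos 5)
Your reduction $\A(H_n)=\tfrac12(\bar L_n+\bar R_n)+O(1/k_n)$ and your treatment of the first two cases are correct and complete (the paper dismisses those cases as evident), and for the main case your strategy --- control each half of a $y$-balanced sequence via Proposition \ref{phm} and take the union over admissible balance points $y$ --- is exactly the paper's. The genuine problem is the step you yourself single out as the decisive difficulty: the bound $\bar R_n\ge\mu-o(1)$ cannot be ``pinned down'', because it is false, and with it the stated identity. The only balance points $y$ for which both halves can be infinite and $y\le m_1$ is $y=m_1$ itself, and this occurs precisely when $H$ accumulates at $m_1=\min H'$ from \emph{both} sides; in that situation $\varliminf H^{m_1+}=m_1<\mu$, and the rearrangement theorem behind Proposition \ref{phm} produces an enumeration $(d_i)$ of $H^{m_1+}$ whose $n$-th partial average tends to any prescribed value in $[\varliminf H^{m_1+},\varlimsup H^{m_1+}]$, in particular to $m_1$. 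Concretely, take $H=\{-\tfrac1n:n\in\mathbb{N}\}\cup\{\tfrac1n:n\in\mathbb{N}\}\cup\{1+\tfrac1n:n\in\mathbb{N}\}$, so $H'=\{0,1\}$ and $[\alpha,\beta]=\{\tfrac12\}$; the $0$-balanced approximating sequence $H_n=\{-1,\dots,-\tfrac1n\}\cup\{d_1,\dots,d_n\}$, with $(d_i)$ enumerating $H^{0+}$ so that $\A(d_1,\dots,d_n)\to0$, satisfies $\A(H_n)\to0$, whence $0\in{\cal{MS}}^{as}(H)\setminus[\alpha,\beta]$.

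You should know that the paper's own proof contains the same gap: it asserts that $[\varliminf H^{y+},\varlimsup H^{y+}]=[a_2,a_4]$ for all $y\in[a_1,a_2]$, which fails at $y=a_1$ exactly in the two-sided accumulation case above (and symmetrically at $y=a_4$). For every other admissible $y$ your bounds do go through: if $y>m_1$ then $H^{y+}\subseteq[y,+\infty)$ forces $\varliminf H^{y+}\ge\inf(H'\setminus\{m_1\})=\mu$, and if $y<m_1$ then $H^{y-}$ is finite and no balanced approximating sequence exists for that $y$. So the theorem, and your plan for it, become correct under the additional hypothesis that $H$ does not accumulate at $\min H'$ from above nor at $\max H'$ from below (a condition satisfied by all of the paper's examples); without that hypothesis the correct answer acquires the extra piece contributed by the balance point $y=\min H'$ (resp.\ $y=\max H'$), namely averages $\tfrac12(m_1+x_2)$ with $x_2\in[m_1,\varlimsup H^{m_1+}]$. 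Your attainment argument for the reverse inclusion is sound in outline but should be phrased via the equal-cardinality enumerations supplied by the rearrangement theorem, as above, so that balancing and steering are achieved simultaneously.
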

\begin{proof}The first two assertions are evident. 

We prove the last one. Clearly $H'-\{\min H'\}\ne\emptyset, H'-\{\max H'\}\ne\emptyset$.

Set $a_1=\min H',a_2=\inf (H'-\{\min H'\}),a_3=\sup (H'-\{\max H'\}),a_4=\max H'$. With this notation we have to prove that ${\cal{MS}}^{as}(H)=[\frac{a_1+a_2}{2},\frac{a_3+a_4}{2}]$.

If $(H_n)$ is $y$-balanced then $\A(H_n)=\frac{\A(H^{y-}_n)+\A(H^{y+}_n)}{2}$. By Proposition \ref{phm} $\lim\limits_{n\to\infty}\A(H^{y-}_n)\in {\cal{MS}}^{a}(H^{y-})=[\varliminf H^{y-},\varlimsup H^{y-}]$ and similarly $\lim\limits_{n\to\infty}\A(H^{y+}_n)\in {\cal{MS}}^{a}(H^{y+})=[\varliminf H^{y+},\varlimsup H^{y+}]$. Then $\lim_{n\to\infty}\A(H_n)\in\{x\in\mathbb{R}:x=\frac{x_1+x_2}{2},\exists y$ such that $,\ x_1\in[\varliminf H^{y-},\varlimsup H^{y-}],x_2\in[\varliminf H^{y+},\varlimsup H^{y+}]\}$. 

If $a_1\leq y\leq a_2$ then $[\varliminf H^{y-},\varlimsup H^{y-}]=\{a_1\},\ [\varliminf H^{y+},\varlimsup H^{y+}]=[a_2,a_4]$. Then $\lim\limits_{n\to\infty}\A(H_n)\in[\frac{a_1+a_2}{2},\frac{a_1+a_4}{2}]$ if $(H_n)$ is $y$-balanced.

If $a_3\leq y\leq a_4$ then $[\varliminf H^{y-},\varlimsup H^{y-}]=[a_1,a_3],\ [\varliminf H^{y+},\varlimsup H^{y+}]=\{a_4\}$. Then $\lim\limits_{n\to\infty}\A(H_n)\in[\frac{a_1+a_4}{2},\frac{a_3+a_4}{2}]$ if $(H_n)$ is $y$-balanced.

If $a_2\leq y\leq a_3$ then $\lim\limits_{n\to\infty}\A(H_n)\in[\frac{a_1+a_2}{2},\frac{a_3+a_4}{2}]$ if $(H_n)$ is $y$-balanced.

The join of these three intervals is ${\cal{MS}}^{as}(H)=[\frac{a_1+a_2}{2},\frac{a_3+a_4}{2}]$.
\end{proof}

\begin{lem}\label{lmxs}$x\in {\cal{MS}}^{axs}(H)\Longleftrightarrow \exists x_1\in[\varliminf H^{x-},\varlimsup H^{x-}],\ \exists x_2\in[\varliminf H^{x+},\varlimsup H^{x+}]$ such that $x=\frac{x_1+x_2}{2}$.
\end{lem}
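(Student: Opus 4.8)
The plan is to prove the two implications separately, reducing everything to Proposition~\ref{phm} applied to the two halves $H^{x-}$ and $H^{x+}$. A preliminary observation I would make first is that in both directions only the case where $H^{x-}$ and $H^{x+}$ are both bounded and of cardinality $\aleph_0$ is relevant: if $(H_n)$ is $x$-balanced and approximating then $|H_n^{x-}|=|H_n^{x+}|$ and $|H_n|\le 2|H_n^{x-}|$, so $|H_n^{x-}|\to\infty$ forces both halves to be infinite; conversely the intervals $[\varliminf H^{x-},\varlimsup H^{x-}]$ and $[\varliminf H^{x+},\varlimsup H^{x+}]$ are nonempty only when the corresponding half is infinite. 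Hence I may assume Proposition~\ref{phm} applies to each half.

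For the forward implication, I would start from an $x$-balanced approximating sequence $(H_n)$ with $\A(H_n)\to x$. Since $H_n^{x-}$ and $H_n^{x+}$ are two equal-sized blocks whose union is $H_n$ (they overlap only in the single point $x$ when $x\in H_n$, which is negligible as the common cardinality tends to infinity), one has $\A(H_n)=\tfrac12\bigl(\A(H_n^{x-})+\A(H_n^{x+})\bigr)+o(1)$. Both $\A(H_n^{x-})$ and $\A(H_n^{x+})$ lie in $[\inf H,\sup H]$, so I would extract a subsequence $(n_k)$ along which they converge to some $x_1$ and $x_2$; passing to the limit yields $x=\tfrac{x_1+x_2}{2}$. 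Finally $(H_{n_k}^{x-})$ is again an approximating sequence for $H^{x-}$ (a subsequence of an increasing exhausting family is one), and its means tend to $x_1$, so Proposition~\ref{phm} gives $x_1\in{\cal{MS}}^a(H^{x-})=[\varliminf H^{x-},\varlimsup H^{x-}]$; symmetrically for $x_2$.

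For the backward implication, given $x_1,x_2$ with $x=\tfrac{x_1+x_2}{2}$, I would invoke the sequence form of Proposition~\ref{phm} (that is, \cite{larrttfc} Theorem~2.6, which was used to prove it): enumerate $H^{x-}$ as $(c_n)$ with $\A(c_1,\dots,c_n)\to x_1$ and $H^{x+}$ as $(e_n)$ with $\A(e_1,\dots,e_n)\to x_2$, and set $H_n=\{c_1,\dots,c_n\}\cup\{e_1,\dots,e_n\}$. Since $c_i\le x\le e_j$ for all $i,j$, the sequence is approximating for $H$ and $x$-balanced with $|H_n^{x-}|=|H_n^{x+}|=n$, while $\A(H_n)=\tfrac12\bigl(\A(c_1,\dots,c_n)+\A(e_1,\dots,e_n)\bigr)\to\tfrac{x_1+x_2}{2}=x$, so $x\in{\cal{MS}}^{axs}(H)$.

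The step I expect to be the real obstacle is the exact cardinality-matching in the backward direction: two arbitrary approximating sequences for the halves need not have equal lengths, and forcing them to agree could destroy the convergence of the means. The device that dissolves the difficulty is to work with one-element-at-a-time enumerations instead of arbitrary approximating sequences, so that taking the first $n$ elements of each side gives equal counts and the clean additive formula for $\A(H_n)$. The only loose end is the boundary point: if $x\in H$ it belongs to both $H^{x-}$ and $H^{x+}$, and I would simply choose the enumerations with $c_1=e_1=x$, which keeps $|H_n^{x-}|=|H_n^{x+}|=n$ for every $n$ without affecting either limiting mean.
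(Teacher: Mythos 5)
Your proposal is correct and follows essentially the same route as the paper: split $H_n$ at $x$ into the two balanced halves, use $\A(H_n)\approx\tfrac12\bigl(\A(H_n^{x-})+\A(H_n^{x+})\bigr)$ together with Proposition~\ref{phm} for the forward direction, and the rearrangement theorem behind Proposition~\ref{phm} to build an $x$-balanced sequence for the converse. In fact you are more careful than the paper's two-line proof, which silently assumes the half-means converge (your subsequence extraction fixes this) and leaves the cardinality-matching and the $x\in H$ boundary case implicit.
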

\begin{proof}It simply comes from the proof of the previous theorem where we stated: $\lim\limits_{n\to\infty}\A(H_n)\in\{x\in\mathbb{R}:x=\frac{x_1+x_2}{2},\exists y$ such that $,\ x_1\in[\varliminf H^{y-},\varlimsup H^{y-}],x_2\in[\varliminf H^{y+},\varlimsup H^{y+}]\}$ where $(H_n)$ was $y$-balanced. If $x=y$ then we get the statement. 
\end{proof}

\begin{thm}${\cal{MS}}^{axs}(H)=\emptyset$ if $H'=\{\alpha\}$ and $|H^{\alpha-}|<\infty$ or $|H^{\alpha+}|<\infty$.

${\cal{MS}}^{axs}(H)=\{\alpha\}$ if $H'=\{\alpha\}$ and $|H^{\alpha-}|=|H^{\alpha+}|=\aleph_0$.

${\cal{MS}}^{axs}(H)=\{\frac{\alpha+\beta}{2}\}$ if $H'=\{\alpha,\beta\}\ (\alpha\ne\beta)$.

In the remaining cases (when $|H'|>2$) ${\cal{MS}}^{axs}(H)\subseteq[\alpha,\beta]$ where $$\alpha=\frac{\min H'+\inf (H'-\{\min H'\})}{2},\beta=\frac{\max H'+\sup (H'-\{\max H'\})}{2}.$$
Moreover ${\cal{MS}}^{axs}(H)$ is a union of countably many non-degenerative intervals.
\end{thm}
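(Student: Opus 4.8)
The plan is to derive everything from Lemma \ref{lmxs}, which reduces membership in ${\cal{MS}}^{axs}(H)$ to a condition on the one-sided accumulation intervals. Write $a_1=\min H'$, $a_4=\max H'$, and introduce $L(x)=\varlimsup H^{x-}$, $R(x)=\varliminf H^{x+}$. First I would record the elementary facts that, whenever $H^{x-}$ is infinite and $x>a_1$ one has $\varliminf H^{x-}=a_1$, and symmetrically $\varlimsup H^{x+}=a_4$ when $H^{x+}$ is infinite and $x<a_4$, since the extreme accumulation points of $H$ survive in the one-sided sets. By Lemma \ref{lmxs}, $x\in{\cal{MS}}^{axs}(H)$ is then equivalent to $2x$ lying in the Minkowski sum of $[a_1,L(x)]$ and $[R(x),a_4]$, i.e. to the pair of inequalities $a_1+R(x)\le 2x\le L(x)+a_4$, subject to both $H^{x-}$ and $H^{x+}$ being infinite. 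The three degenerate cases I would dispose of by evaluating these intervals directly: if one of $H^{x\pm}$ is finite its accumulation interval is empty and $x$ is excluded, while otherwise the forced values of $x_1,x_2$ pin $x$ down, exactly in the spirit of the ${\cal{MS}}^{as}$ theorem and Proposition \ref{phm}.

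For the main case $|H'|>2$ I would next establish ${\cal{MS}}^{axs}(H)\subseteq[\alpha,\beta]$. The upper bound comes from $2x\le L(x)+a_4$: for every $x<a_4$ one has $L(x)=\varlimsup H^{x-}\le\sup(H'-\{a_4\})$, since $a_4\notin(-\infty,x]$, so $2x\le\sup(H'-\{a_4\})+a_4$ and hence $x\le\beta$; points $x\ge a_4$ are excluded because $H^{x+}$ is then finite. The lower bound $x\ge\alpha$ follows symmetrically from $a_1+R(x)\le 2x$ together with $R(x)\ge\inf(H'-\{a_1\})$ for $x>a_1$.

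It remains to show that ${\cal{MS}}^{axs}(H)$ is a union of countably many non-degenerate intervals. The key observation is that $\mathbb{R}-H'$ is open, hence a countable disjoint union of gaps $(p,q)$ with $p,q\in H'$; on each such gap no point is an accumulation point of $H$, so $L\equiv p$ and $R\equiv q$ there, and the defining inequalities become linear in $x$, cutting out the sub-interval \[{\cal{MS}}^{axs}(H)\cap(p,q)=(p,q)\cap\Big[\tfrac{a_1+q}{2},\tfrac{p+a_4}{2}\Big].\] Every interior accumulation point at which $H$ accumulates from both sides satisfies $L(x)=R(x)=x$, reducing the inequalities to $a_1\le x\le a_4$, so such points lie in ${\cal{MS}}^{axs}(H)$ and serve to glue the admissible pieces of adjacent gaps together. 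Collecting the countably many gap-pieces and the interior accumulation points, the connected components of ${\cal{MS}}^{axs}(H)$ are intervals; since any family of pairwise disjoint non-degenerate intervals is countable (each contains a distinct rational), it then suffices to verify that no component is degenerate.

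The hard part will be exactly this non-degeneracy check together with the bookkeeping of the one-sided infiniteness condition. Whether $H^{x-}$ or $H^{x+}$ is infinite at an accumulation point $x\in H'$ depends on the side(s) from which $H$ approaches $x$, and correspondingly $L(x)$ and $R(x)$ equal $x$ only when $H$ accumulates from the appropriate side; at points approached from a single side the one-sided interval collapses and $x$ may be excluded. This is precisely where the delicate case distinctions live: one must track the side of accumulation at the extreme points $a_1,a_4$ and at isolated points of $H'$ so as not to produce spurious single-point components, and confirm that the deleted end-pieces of neighbouring gaps never isolate a lone admissible point. I expect this one-sided analysis — rather than the reduction via Lemma \ref{lmxs}, which is routine — to carry the real weight of the argument.
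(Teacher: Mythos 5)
Your reduction via Lemma \ref{lmxs} to the inequalities $a_1+R(x)\le 2x\le L(x)+a_4$, your computation of ${\cal{MS}}^{axs}(H)$ on the gaps of $H'$, and your bound $x\le\beta$ for $x<a_4$ all track the paper's argument, which runs through the same lemma and essentially the same case split (gap points versus points of $H'$). But the proposal stops exactly where the proof has to begin: you announce that the non-degeneracy check at one-sidedly approached points of $H'$ ``will carry the real weight of the argument'' and then do not carry it. That check \emph{is} the last assertion of the theorem; without it you have not excluded isolated points of ${\cal{MS}}^{axs}(H)$ lying in $H'$. The paper's Case (2) supplies the missing idea: for $x\in H'\cap{\cal{MS}}^{axs}(H)$ with, say, $\varlimsup H^{x-}=x$, write $x=\frac{x_1+x_2}{2}$ with $x_1\in[\varliminf H^{x-},\varlimsup H^{x-}]$, $x_2\in[\varliminf H^{x+},\varlimsup H^{x+}]$ and $x_1$ minimal; one shows $x_1<x$ (otherwise $x_2=x$ forces $x$ to be a two-sided accumulation point, and then a smaller $x_1$ paired with a larger $x_2$ is admissible), and letting $y_1$ run through $[x_1,x)$ yields $\frac{y_1+x_2}{2}\in{\cal{MS}}^{axs}(H)$, hence an interval $[x,x+\epsilon)\subset{\cal{MS}}^{axs}(H)$. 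Some perturbation argument of this kind is indispensable. Note also that your gluing claim for two-sided accumulation points is itself incomplete: when such a point is an accumulation point of $H'$ there are no ``adjacent gaps'' to glue, and when it is isolated in $H'$ you must still verify that at least one neighbouring gap-piece reaches it (this does follow from $q-p<a_4-a_1$ when $|H'|>2$, but it has to be said).

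A smaller but concrete error: ``points $x\ge a_4$ are excluded because $H^{x+}$ is then finite'' fails at $x=a_4$, since $H$ may accumulate at $\max H'$ from above; if it also accumulates there from below, Lemma \ref{lmxs} even places $a_4$ in ${\cal{MS}}^{axs}(H)$. The paper does not argue this endpoint directly either --- it deduces the containment in $[\alpha,\beta]$ from ${\cal{MS}}^{axs}(H)\subset{\cal{MS}}^{as}(H)$ and the preceding theorem --- so if you want a self-contained derivation you must treat $x=a_1$ and $x=a_4$ separately rather than dismiss them.
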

\begin{proof}The first three assertions are evident. ${\cal{MS}}^{axs}\subset {\cal{MS}}^{as}$ gives the fourth.

We prove that ${\cal{MS}}^{axs}(H)$ is a union of countably many non-degenerative intervals. We have to show that if $x\in {\cal{MS}}^{axs}(H)$ then there is a non-degenerative interval $I$ such that $x\in I$ and $I\subset {\cal{MS}}^{axs}(H)$. 

If $x\in[\min H',\max H']$ then $|H'|>2$ gives that one of the intervals $[\varliminf H^{x-},\varlimsup H^{x-}],\ [\varliminf H^{x+},\varlimsup H^{x+}]$ is non-degenerative. By Lemma \ref{lmxs} we know that $x\in {\cal{MS}}^{axs}(H)$ implies that $x\in [\frac{\varliminf H^{x-}+\varliminf H^{x+}}{2},\frac{\varlimsup H^{x-}+\varlimsup H^{x+}}{2}]$. 

There are two cases.

(1) $x\notin H'$. Then $I=[\frac{\varliminf H^{x-}+\varliminf H^{x+}}{2},\frac{\varlimsup H^{x-}+\varlimsup H^{x+}}{2}]\bigcap (\varlimsup H^{x-},\varliminf H^{x+})\subset {\cal{MS}}^{axs}(H)$ where both intervals are non-degenerative and so is $I$ (obviously $x\in I$).

(2) $x\in H'-\{\varliminf H,\varlimsup H\}$. Let assume that $x=\varlimsup H^{x-}$ (the other case can be handled similarly). Then $x=\frac{x_1+x_2}{2}$ where $ x_1\in[\varliminf H^{x-},\varlimsup H^{x-}],x_2\in[\varliminf H^{+x},\varlimsup H^{+x}]$. Let us take the minimum such $x_1$. Then $x_1$ cannot be equal to $x$ because it would imply that $x_2=x$ i.e. $x$ would be a right sided accumulation point as well hence $\varliminf H^{+x}=x$ would hold so we could choose a smaller $x_1$ and a greater $x_2$ too. Therefore $x_1<x=\varlimsup H^{x-}$. That gives that $\exists\epsilon>0$ such that $[x,x+\epsilon)\subset {\cal{MS}}^{axs}(H)$ because $\frac{y_1+x_2}{2}\in{\cal{MS}}^{axs}(H)$ for $y_1\in[x_1,x)$.
\end{proof}

\begin{ex}\hfill
\begin{itemize}

\item $H_1=\{\frac{1}{n}:n\in\mathbb{N}\}\cup\{1+\frac{1}{n}:n\in\mathbb{N}\}$

${\cal{MS}}^{a}(H_1)=[0,1],\ {\cal{MS}}^{as}(H_1)={\cal{MS}}^{axs}(H_1)=\{\frac{1}{2}\}$

\item $H_2=\{\frac{1}{n}:n\in\mathbb{N}\}\cup\{1+\frac{1}{n}+\frac{1}{k}:n,k\in\mathbb{N}\}$.

${\cal{MS}}^a(H_2)=[0,2],\ {\cal{MS}}^{as}(H_2)={\cal{MS}}^{axs}(H_2)=[0.5,1.75]$

\item $H_3=\{\frac{1}{n}:n\in\mathbb{N}\}\cup\{1-\frac{1}{n}:n\in\mathbb{N}\}\cup\{5+\frac{1}{n}:n\in\mathbb{N}\}$.

${\cal{MS}}^a(H_3)=[0,5],{\cal{MS}}^{as}(H_3)=[0.5,3],{\cal{MS}}^{axs}(H_3)=[0.5,1)\cup[2.5,3]$
\end{itemize}
\end{ex}


{\footnotesize

\noindent
Dennis G\'abor College, Hungary 1119 Budapest Fej\'er Lip\'ot u. 70.

\noindent E-mail: losonczi@gdf.hu, alosonczi1@gmail.com\\
}
\end{document}